
\documentclass[letterpaper, 10 pt, conference]{ieeeconf}  

\IEEEoverridecommandlockouts                              
\overrideIEEEmargins


\title{\LARGE \bf
A Quantization Procedure for Nonlinear Pricing with an Application to Electricity Markets
}


\author{Quentin Jacquet, Wim van Ackooij, Clémence Alasseur and Stéphane Gaubert
\thanks{Q. Jacquet, W. van Ackooij and C. Alasseur are with EDF R\&D Saclay, Palaiseau, France
        {\tt\small \{quentin.jacquet, wim.van-ackooij, clemence.alasseur\}@edf.fr}}%
\thanks{Q. Jacquet and S. Gaubert are with INRIA, CMAP, Ecole Polytechnique, IP Paris, CNRS, Palaiseau, France
        {\tt\small stephane.gaubert@inria.fr}}%
}

\usepackage{enumitem}
\usepackage{dsfont,amsmath,amssymb,mathtools}

\DeclareMathOperator*{\argmin}{arg\,min}

\DeclareMathOperator{\bbR}{\mathbb{R}}

\DeclareMathOperator{\calC}{\mathcal{C}}

\DeclareMathOperator{\calE}{\mathcal{E}}
\DeclareMathOperator{\calK}{\mathcal{K}}

\DeclareMathOperator{\calJ}{\mathcal{J}}
\DeclareMathOperator{\calL}{\mathcal{L}}

\DeclareMathOperator{\calU}{\mathcal{U}}

\newcommand{\dx}{\textnormal{d}x}
\newcommand{\cz}{\check{z}}

\newcommand{\ce}{\check{x}}

\newcommand{\shortminus}{\scalebox{0.5}[1.0]{$-$}}

\usepackage{hyperref}
\usepackage[capitalise]{cleveref}

\newtheorem{prop}{Proposition}[section]
\newtheorem{theorem}[prop]{Theorem}

\newtheorem{hypo}{Assumption}[section]

\DeclareMathSymbol{\mlq}{\mathord}{operators}{``}
\DeclareMathSymbol{\mrq}{\mathord}{operators}{`'}

\makeatletter
\newcommand*\frob{\mathpalette\bigcdot@{.7}}
\newcommand*\bigcdot@[2]{\mathbin{\vcenter{\hbox{\scalebox{#2}{$\m@th#1\bullet$}}}}}

\usepackage[colorinlistoftodos,bordercolor=orange,backgroundcolor=orange!20,linecolor=orange,textsize=scriptsize]{todonotes}

\usepackage[font=small,justification=centering]{caption}
\usepackage[justification=centering]{subcaption}

\usepackage{textcomp}
\usepackage{algorithm}
\usepackage{algpseudocode}

\def\namedlabel#1#2{\begingroup
    #2%
    \def\@currentlabel{#2}%
    \phantomsection\label{#1}\endgroup
}

\newcounter{savealgorithm}
\newenvironment{subalgorithms}
 {%
  \stepcounter{algorithm}%
  \edef\currentthealgorithm{\thealgorithm}%
  \setcounter{savealgorithm}{\value{algorithm}}%
  \setcounter{algorithm}{0}%
  \renewcommand{\thealgorithm}{\currentthealgorithm\alph{algorithm}}%
 }
 {%
  \setcounter{algorithm}{\value{savealgorithm}}%
 }

\usepackage[sorting=none,natbib=true,backend=biber,doi=false,url=false,
isbn=false,eprint=true,giveninits=true,maxbibnames=99]{biblatex}
\addbibresource{mybib.bib}

\begin{document}

\maketitle
\thispagestyle{empty}
\pagestyle{empty}

\begin{abstract}
  We consider a revenue maximization model, in which a company aims at designing a menu of contracts, given a population of customers. A standard approach consists in constructing an incentive-compatible continuum of contracts, i.e., a menu composed of an infinite number of contracts, where each contract is especially adapted to an infinitesimal customer, taking his type into account. Nonetheless, in many applications, the company is constrained to offering a limited number of contracts. We show that this question
reduces to an optimal quantization problem, similar to the {\em pruning problem} that appeared in the max-plus based numerical methods in optimal control. We develop a new quantization algorithm, which, given an initial menu of contracts, iteratively prunes the less important contracts, to construct an implementable menu of the desired cardinality, while minimizing the revenue loss. We apply this algorithm to solve a pricing problem with price-elastic demand, originating from the electricity retail market. Numerical results show an improved performance by comparison with earlier pruning algorithms.
\end{abstract}

\section{INTRODUCTION}

\subsection{Motivation from electricity markets}
Electricity retail markets are now open to competition in most countries, and providers are free to design a  \emph{menu} of offers/contracts in addition to regulated alternatives (fixed prices), so that each consumer can select among the vast jungle of offers the one which maximizes his utility. Finding an appropriate utility function which fairly represents the consumer behavior is all but immediate. In this paper, the choice of a contract is based on the minimization of the invoice (rational choice theory, see e.g.~\citep{Scott_2000}), and we suppose that each customer can adjust his consumption to the electricity prices (\emph{price elasticity}). 
This phenomenon is highlighted by the actual energy crisis: consumers are likely to make huge efforts in view of consumption reduction. 

A key problem for electricity providers is to design an optimal menu of offers,
maximizing their revenue, under a restriction on  the ``size'' of the menu (number of contracts).
In fact, from an optimization point of view, proposing more contracts increases the revenue, as it allows one to adjust the menu to the individual preferences of the different types of customers. However, in practice, it is essential to restrict the number of contracts, in order to make the commercial offer more visible to agents, easier to understand, and also to keep an implementable menu for the company.

\subsection{The optimal nonlinear pricing problem}
We consider more generally the revenue maximization problem faced by a seller, called \emph{principal},
or \emph{leader} in the setting of Stackelberg games~\cite{Simaan_1973}. This problem has been addressed by the theory of mechanism design~\cite{Borgers_2015} through the question of nonlinear pricing. The so-called \emph{monopolist problem} is among the most studied ones: in this approach, the population is represented as a continuum of buyers (called \emph{agents} or \emph{followers}), and a contract can be specifically designed for each agent (continuum menu). In the seminal paper~\citep{Rochet_1998}, Rochet and Chon\'e study the monopolist problem by introducing a dual approach. In some specific cases (linear-quadratic setting and specific agents distribution), analytic solutions can be found in one~\cite{Mussa_1978} or many dimensions~\cite{Armstrong_1996}, via reformulation as welfare maximization using virtual valuation technique. Extending the framework of Rochet and Choné to decomposable variational problem under convexity requirement, Carlier~\cite{Carlier_2017} addresses the question of the existence and uniqueness of a solution, and proposes an iterative algorithm.  In the specific case $\bbR^2$, Mirebeau~\cite{Mirebeau_2016} introduces a more efficient method using an adaptive mesh based on stencils. The infinite-size menu is therefore characterized by a value-function satisfying the incentive-compatibility conditions as with the full-participation condition, the latter supposing that contracting with the whole population is optimal. Bergemann, Yeh and Zhang
recently considered the question of the optimal quantization of a menu~\cite{Bergemann_2021}.

\subsection{Contributions}
Our main contribution is the development of new \emph{quantization} algorithms which, given the infinite-size menu, aim at finding the best $n$-contracts approximation that maximizes the revenue. This 2-step strategy bypasses the combinatorial difficulty tackled in bilevel pricing -- see e.g.~\cite{Labbe_1998,Baldwin_2019} -- where formulations directly embed customer choices over the $n$ contracts, becoming rapidly untractable for large size of menu. We show that the quantization problem is equivalent to the {\em pruning problem}, which arose, following McEneaney~\cite{McEneaney_2007}, in the development of the max-plus based curse-of-dimensionality attenuation methods in numerical optimal control, see~\cite{McEneaney_2008,Gaubert_2011,Gaubert_2014}, and~\cite{mceneaneydower} for an application. In these methods, the value function of an optimal control problem is represented as a supremum of ``basis functions'', and one looks for a sparse representation -- with a prescribed number of basis functions.  In the present application, the basis functions are linear functions, representing contracts. 
We develop a {\em greedy descent} algorithm which iteratively removes the less ``important'' contracts.
We consider different importance measures, taking into account the $L_1$ and $L_\infty$ approximation errors previously considered in the study of the pruning problem, and also a specific measure of the loss of revenue, see~\Cref{alg:pruning,algo::up1}. An essential feature of these algorithms is the low incremental cost per iteration, with an update rule requiring only {\em local} computations
-- in a ``small neighborhood'' of the active set of a basis function. To do so, we exploit 
discrete geometry techniques, by associating to a basis decomposition a polyhedral complex, which
is updated dynamically.

To apply this algorithm to the optimal design of a menu in the electricity retail market, we generalize the framework of~\cite{Carlier_2017} to allow for a nondecomposable (still convex) cost. Indeed, the revenue of the provider depends on the furniture cost, supposed to be an increasing function of the \emph{global} consumption, see e.g.~(\cite{Ackooij_2018, Alekseeva_2019}).  In this extended setting, we prove the existence and uniqueness of the solution, see~\Cref{theorem::existence_1}. The solving of this problem is then tackled by a direct method (discretization of the variational problem). A key feature of the pricing application is the elastic behavior of the customers, who adapt their consumption according to prices.
We show that, after an appropriate change of variables, this actually reduces to the previous model,
see~\Cref{theorem::mf_problem_2}.
Numerical tests, on a realistic instance (arising from the French electricity market),
illustrate the efficiency of our approach both in terms of revenue gain and of computational time, see~\Cref{fig::errors}. Our algorithm also allows one to estimate the minimal admissible number of contracts,
given a target level of acceptable revenue loss by comparison with the case of an infinite number
of contracts. 

\subsection{Related works}
In the nonlinear pricing context, the restriction to a finite number of offers has been regarded only recently. In \cite{Bergemann_2021}, the authors analyze the loss of revenue induced by this restriction, exhibiting upper bounds of order $1/n^{2/d}$, where $d$ is the dimension and $n$ the maximal number of contracts. A similar asymptotic error rate arose in a different setting of quantization theory, see e.g.~\cite{Gaubert_2011}. Moreover, in the linear-quadratic setting of~\cite{Bergemann_2021}, the extreme distributions realizing the worst revenue loss satisfies separability conditions à la Armstrong~\cite{Armstrong_1996}, leading to an explicit expression for the optimal quantization. We do not satisfy these requirements here, as we tackle a broader class of variational problem, hence the need of efficient methods to solve the pricing problems with a finite number of contracts. In~\cite{Ekeland_2010}, a discretization is obtained by writing the utility function as a supremum of finitely many affine functions, and so the solution they obtain can be viewed as a $n$-contracts menu. However, the scheme also discretizes the population (with the same size as the contracts). In the present application, this is not desirable, since the size of the population has to be much larger that the size of the menu.

The present algorithms should be compared with the pruning methods to compute
a sparse representation of a function as maximum of a prescribed number of basis functions.
The pruning problem was shown in~\cite{Gaubert_2011} to be a continuous version
of the facility location problem, a hard combinatorial optimization problem.
The pruning algorithms developed in~\cite{McEneaney_2008,Gaubert_2011} rely
on a notion of importance metric, measuring the contribution of each basis
function to the approximation error. A basic algorithm in~\cite{McEneaney_2008,Gaubert_2011}
perform a single pass which keeps only the $n$ basis functions with the highest importance metric, the latter being evaluated either by solving a convex programming
problem or in approximate way, after a discretization of the state space.
A {\em greedy ascent} algorithm is also implemented in~\cite{Gaubert_2011},
adding incrementally functions by decreasing order of importance.
In contrast, the present algorithm does not require a discretization
of the state space. Moreover, the use of fast (local) updates of the importance measure allows us to perform a greedy descent 
starting from the complete family of basis functions, and removing at each
stage the less important one. This leads to improved performances on
our application case. 

The paper is organized as follows: in~\Cref{sec::model}, we define the nonlinear pricing problem, adapted to our application case, and encompassing the monopolist framework. In~\Cref{sec::pruning}, we approximate the continuum menu by a finite set of contracts, and present refined pruning algorithms with local update. Then, in~\Cref{sec::app}, we specify the problem encountered in electricity markets, and show how it boils down to the general case of~\Cref{sec::model}. Finally,
we numerically study the effectiveness of our approach in~\Cref{sec::results}.

\section{NONLINEAR PRICING WITH COUPLING COSTS}\label{sec::model}

\subsection{Notation}
For two vectors $x$ and $y$ of $\bbR^d$, we denote by $\left<x,y\right>$ the scalar product and $x\odot y$ the entrywise product. Moreover, for a discrete set $S$, we denote by $|S|$ the cardinality of $S$. 

\subsection{Generalized monopolist problem}
Let us consider a heterogeneous population, where each agent in the population is defined by a $d$-dimensional vector of characteristics $x\in X$. We suppose that $X\subset \bbR^d_{>0}$ is a compact polyhedral domain. An agent of type $x$ will derive a utility $\left<x,\alpha \odot q_k\right> - p_k$ from consuming a good $k$ with quality $q_k\in\bbR^d_{>0}$ and price $p_k\in\bbR_{>0}$. The vector $\alpha\in(\bbR^*)^d$ is an exogeneous data rescaling the quality vector. The agents are distributed according to $\rho$ satysfying $\int_X \rho(x)\dx = 1$.

Let us consider a \emph{monopolist} (principal) who designs a contract menu represented by a pair of functions $x\mapsto (p(x),q(x))\in P \times Q$. For each agent $x$, these functions indicate respectively the price and the quality that the agent is supposed to prefer. Here, $P$ and $Q$ are compact subsets of $\bbR_{>0}$ and $\bbR^d_{>0}$. To ensure that the contract $(p(x),q(x))$ really satisfies agent of type $x$, an additional constraint on the shape of the function, called \emph{incentive-compatibility} condition is required: denoting by $u(x) := \left<x,\alpha \odot q(x)\right> - p(x)$ the utility function for the menu designed by the monopolist,
\begin{equation}~\label{eq::i_compatibility}
u(y)-u(x)\geq \left<y-x,\alpha \odot q(x)\right>, \;\forall x,y \in X \enspace.
\end{equation}
Let $U_x$ be the set of admissible values of $u$ for type $x$:
$$U_x := \{\left<x,\alpha \odot q\right> - p\mid (p,q)\in P\times Q\}\enspace.$$
Each set $U_x$ is compact by compactness of $P$ and $Q$.

\begin{prop}[\cite{Rochet_1987}]\label{prop::rochet_87}
Let $q(\cdot)$ be defined on $ X$, with values in $Q$. There exists a function $p:X\to P$ such that
$u(\cdot)$ satisfies~\eqref{eq::i_compatibility} if and only if
\begin{enumerate}[label=(\roman*)]
\item $u(x)\in U_x,$ for $x\in X$,
\item $u$ is convex on $X$,
\item $ \nabla u(x) = \alpha \odot q(x)$ for a.e. $x\in X$.
\end{enumerate}
\end{prop}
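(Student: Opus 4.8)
The plan is to observe that the incentive-compatibility inequality~\eqref{eq::i_compatibility} is exactly the assertion that, at each type $x$, the vector $\alpha\odot q(x)$ is a subgradient of $u$, i.e.\ $\alpha\odot q(x)\in\partial u(x)$ for every $x\in X$. Indeed, freezing $x$ and letting $y$ range, the right-hand side of~\eqref{eq::i_compatibility} describes the affine map $y\mapsto u(x)+\langle y-x,\alpha\odot q(x)\rangle$, which~\eqref{eq::i_compatibility} declares to be a global minorant of $u$ that is tight at $y=x$. This single reformulation drives both implications, reducing the statement to standard facts of convex analysis.

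For the direct implication I would start from a price $p$ for which $u=\langle\cdot,\alpha\odot q(\cdot)\rangle-p(\cdot)$ obeys~\eqref{eq::i_compatibility}. Condition~(i) is then immediate, as $(p(x),q(x))\in P\times Q$ forces $u(x)\in U_x$ by definition of $U_x$. For~(ii), I would use the minorant picture to write $u(y)=\sup_{x\in X}\{u(x)+\langle y-x,\alpha\odot q(x)\rangle\}$, the supremum being attained at $x=y$; as a pointwise supremum of affine functions, $u$ is convex. For~(iii), I would recall that a finite convex function is differentiable almost everywhere on the interior of its domain and that, at such a point, $\partial u(x)=\{\nabla u(x)\}$; combined with $\alpha\odot q(x)\in\partial u(x)$ this yields $\nabla u(x)=\alpha\odot q(x)$ for a.e.\ $x$.

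For the converse, assuming (i)--(iii), I would set $p(x):=\langle x,\alpha\odot q(x)\rangle-u(x)$, so that $u$ is indeed the utility attached to $(p,q)$, the membership $u(x)\in U_x$ of~(i) being what guarantees that $p$ takes its values in $P$. It then remains to check~\eqref{eq::i_compatibility}, that is, $\alpha\odot q(x)\in\partial u(x)$ for \emph{every} $x$. At any differentiability point this is immediate: convexity~(ii) gives $u(y)\ge u(x)+\langle\nabla u(x),y-x\rangle$, and~(iii) identifies $\nabla u(x)$ with $\alpha\odot q(x)$.

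The delicate point, and the one I expect to be the main obstacle, is precisely the passage from ``almost every $x$'' to ``every $x$'' in this last step, since~(iii) constrains $q$ only off a negligible set, whereas~\eqref{eq::i_compatibility} must also hold at the non-differentiability points. I would close this gap using the regularity of the subdifferential of a convex function: the map $x\mapsto\partial u(x)$ has closed graph, and at any $x$ the set $\partial u(x)$ is the closed convex hull of the limits $\lim_n\nabla u(x_n)$ taken along differentiable sequences $x_n\to x$. Hence, as soon as $q$ is continuous -- or, more generally, is reconciled on the null set of non-differentiability points with a measurable selection of the subdifferential $\partial u$ (rescaled by $\alpha$) -- one gets $\alpha\odot q(x)=\lim_n\nabla u(x_n)\in\partial u(x)$ at every $x$, which is~\eqref{eq::i_compatibility}. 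Everything outside this extension argument is a direct transcription of convex-analytic facts.
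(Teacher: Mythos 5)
The paper gives no proof of this proposition---it is imported verbatim from Rochet (1987)---so there is nothing internal to compare against; judged on its own terms, your argument is correct and is essentially the standard proof of Rochet's characterization: incentive compatibility is the statement $\alpha\odot q(x)\in\partial u(x)$ for all $x$, the forward direction follows from writing $u$ as a supremum of its affine minorants and from a.e.\ differentiability of convex functions, and the converse follows from the subgradient inequality at differentiability points. You are also right that the only delicate point is upgrading ``a.e.'' to ``everywhere'' in the converse, and your fix (continuity of $q$, or redefining $q$ on the null set via a selection of $\partial u$, using that $\partial u(x)$ is the closed convex hull of limits of nearby gradients) is exactly how this is handled in the literature; note that this redefinition is harmless in the present paper since $q$ enters the objective $J$ only through integrals, which are insensitive to changes on null sets.
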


The aim of the monopolist is then to maximize a revenue function, defined as 
\begin{equation}\label{eq::general_obj}
J(u,q):=\int_X L(x,u(x),q(x)) \dx - C\left(\int_X M(x,q(x))\dx\right) \,,
\end{equation}
In~\eqref{eq::general_obj}, the cost function $C$ takes as input data aggregated on the whole domain $ X$. Such coupling cost naturally appears in some applications, for instance in electricity retail market, see~\Cref{sec::app}.
\begin{hypo}\label{hypo::L_and_M}
The integrand $L$ is linear in $u$ and $q$. Moreover, the integrand $M$ is strictly convex in $q$, and $C$ is increasing and strictly convex.
\end{hypo}
In addition to the incentive-compatibility condition, the utility must be greater to a reservation utility:
\begin{equation}\label{eq::reservation_utility}
u(x)\geq R(x) \enspace.
\end{equation}
The problem solved by the monopolist is then
\begin{equation}\label{eq::mf_problem_0}
\max_{u,q} \left\{J(u,q)\;\left\vert 
\begin{split}
&u,q\text{ satisfy }\eqref{eq::i_compatibility},\eqref{eq::reservation_utility}\\
& (u(x),q(x))\in U_x \times Q \text{ for }x \in X
\end{split}
\right.\right\} 
\end{equation}

\begin{theorem}\label{theorem::existence_1}
Under~\Cref{hypo::L_and_M}, Problem~\eqref{eq::mf_problem_0} has a unique optimal solution.
\end{theorem}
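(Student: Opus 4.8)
The plan is to recast Problem~\eqref{eq::mf_problem_0} as the maximization of a concave functional over a compact convex set of convex functions, obtain existence by the direct method, and derive uniqueness from concavity. First I would use~\Cref{prop::rochet_87} to eliminate the variable $q$: any admissible pair has $u$ convex with $\alpha\odot q(x)=\nabla u(x)$ for a.e. $x$, and since $\alpha\in(\bbR^*)^d$ this relation determines $q$ uniquely a.e. from $\nabla u$. Hence the feasible set can be described purely through $u$, as the set $\mathcal{F}$ of convex functions on $X$ whose subgradients lie in $\alpha\odot Q$ and which satisfy the pointwise constraints $u(x)\in U_x$ and $u(x)\geq R(x)$. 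Taking $Q$ convex (so that each $U_x$ is an interval), $\mathcal{F}$ is convex: convex combinations preserve convexity of $u$, the subgradient condition (since $\alpha\odot Q$ is convex), and the two pointwise constraints.

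Second, I would establish that $\mathcal{F}$ is compact for the topology of uniform convergence on $X$. Compactness of $Q$ forces a common Lipschitz bound on all $u\in\mathcal{F}$ through the bound $|\nabla u|\in\alpha\odot Q$, and compactness of the sets $U_x$ gives uniform pointwise bounds; by Arzel\`a--Ascoli, $\mathcal{F}$ is relatively compact in $C(X)$. Closedness follows because a uniform limit of convex functions is convex, its subgradients remain in the compact convex set $\alpha\odot Q$, and the closed constraints $u(x)\in U_x$ and $u\geq R$ pass to the limit. Continuity of $J$ on $\mathcal{F}$ is then the key analytic point: along a uniformly convergent sequence of convex functions the gradients converge a.e. (convex functions being a.e. differentiable) and stay uniformly bounded, so dominated convergence yields continuity of the affine term $\int_X L(x,u,q)\dx$ and of $\int_X M(x,q(x))\dx$, and composing the latter with the continuous $C$ handles the coupling term. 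Weierstrass' theorem then gives a maximizer.

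Third, for uniqueness I would exploit concavity of $J$: the term $\int_X L$ is affine in $u$ by~\Cref{hypo::L_and_M}, while $u\mapsto\int_X M(x,q(x))\dx$ is convex because $M$ is convex in $q$, and since $C$ is increasing and convex the composite $-C\!\left(\int_X M(x,q(x))\dx\right)$ is concave. Suppose $u_1,u_2\in\mathcal{F}$ are both optimal; then every convex combination $u_t$ is feasible and optimal, forcing equality throughout the concavity estimates. Because $C$ is increasing and strictly convex (hence strictly increasing, as strict convexity rules out any flat piece), these equalities first give $\int_X M(x,q_1)\dx=\int_X M(x,q_2)\dx$ and affineness of $t\mapsto\int_X M(x,q_t)\dx$; strict convexity of $M$ in $q$ then forces $\nabla u_1=\nabla u_2$ a.e., so on the connected domain $X$ the difference $u_1-u_2$ is a constant $c$.

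The main obstacle I anticipate is concluding $c=0$. Once $\nabla u_1=\nabla u_2$, the coupling term is identical for $u_1$ and $u_2$, so equality of the revenues reduces to $c\int_X \partial_u L\,\dx=0$, where $\partial_u L$ is the ($x$-dependent) coefficient of $u$ in $L$. This does not follow from~\Cref{hypo::L_and_M} alone and must use the structure of the revenue: the coefficient of $u$ is $-\rho$ with $\int_X\rho\,\dx=1\neq0$, so $\int_X\partial_u L\,\dx\neq0$ and hence $c=0$ (equivalently, if $\int_X\partial_u L\neq0$ a uniform shift of $u$ would strictly change the revenue, so two optimal translates cannot coexist). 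The remaining technical points --- the a.e.\ gradient convergence in the continuity step and the stability of the subgradient constraint under uniform limits --- are standard for families of convex functions with uniformly bounded gradients.
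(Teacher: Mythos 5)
Your proof is correct and follows the same overall skeleton as the paper's: eliminate $q$ via \Cref{prop::rochet_87}, maximize a concave functional over a compact convex set of convex functions, and derive uniqueness from strict convexity of the coupling term. The two arguments differ in packaging at two points. First, for existence the paper works in $H^1(X)$: it shows the feasible set $\calK$ is closed, convex and bounded there, shows $J$ is concave and (via boundedness, citing Ekeland--Temam) continuous, and invokes weak compactness in the reflexive space $H^1(X)$ together with weak upper semicontinuity of concave continuous functionals. You instead use strong compactness in $C(X)$ via Arzel\`a--Ascoli, relying on the uniform Lipschitz bound from $\nabla u\in\alpha\odot Q$ and on a.e.\ convergence of gradients of uniformly convergent convex functions to pass to the limit in $J$ by dominated convergence. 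Both routes are sound; yours is more elementary and self-contained, the paper's is shorter but leans on cited functional-analytic machinery. (Both proofs, yours explicitly and the paper's implicitly, need $Q$ and each $U_x$ to be convex for the feasible set to be convex; this holds in the application where $Q$ is a polytope.) Second, and to your credit, you identify the one genuinely underjustified step in the paper's uniqueness argument: after strict convexity of $u\mapsto C\bigl(\int_X M(x,\nabla u)\dx\bigr)$ forces $\nabla u_1=\nabla u_2$ a.e.\ and hence $u_1-u_2\equiv c$, the paper simply asserts that ``the objective values differ by the same constant,'' which contradicts joint optimality only if $c\int_X\partial_u L\,\dx\neq 0$. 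This does not follow from \Cref{hypo::L_and_M} (linearity of $L$ in $u$) alone; it requires the coefficient of $u$ in $L$ to have nonzero integral, which is the case in the application where that coefficient is $-\rho$ with $\int_X\rho\,\dx=1$. Your explicit handling of this point closes a small gap that the paper leaves implicit.
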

The proof of Theorem~\ref{theorem::existence_1} is given in~\Cref{sec::existence}. This result should be compared with~\citep{Carlier_2001}, where the (decomposable) criteria is defined by an integrand that must satisfy coercivity condition, which entails that a minimizing sequence $(u_n)$ must be bounded in the $W^{1,1}$ Sobolev norm.
Here, $J$ is not necessarily coercive. Instead, the compactness argument directly comes with assumptions on $P$ and $Q$.

\subsection{Resolution of the infinite-size case}
As an extension of the monopolist problem, Problem~\eqref{eq::mf_problem_0}  can be solved to optimality through a discretization scheme. In~\citep{Ekeland_2010}, the authors proved the convergence of the discretized problem to the continuous one, which can be extended to nondecomposable cost. Efficient numerical methods have been proposed in~\cite{Carlier_2017} and~\cite{Mirebeau_2016}. Let us define a regular grid $\Sigma$ of $X$. Each of the methods provides a solution $\{(\hat{p}_i,\hat{q}_i)\}_{i\in \Sigma}$, inducing a convex utility function $\hat{u}_{\Sigma}$ that  can be represented as the 
supremum of affine functions, with the notation:
\begin{equation}
\hat{u}_S(x) = \max_{i\in S} \hat{u}_i(x),\;S\subseteq \Sigma\enspace,
\end{equation}
where $\hat{u}_i: x\in\bbR^d\mapsto \left<\hat{q}_{i} ,x\right> - \hat{p}_i$. In the context of max-plus methods~\citep{McEneaney_2007, Akian_2008}, the functions $\hat{u}_i$ are called \emph{basis functions} and can be more general than affine functions, but we focus here on this specific case, as this naturally appears in the model (affine contracts).

\section{PRUNING PROCEDURES}\label{sec::pruning}

\subsection{Pruning method for max-plus basis decomposition}
Let us now suppose that the monopolist has a maximal number of $n$ contracts he can design. Given the discretized infinite-size solution $u_\Sigma$, the question can be recast as the following combinatorial problem:
\begin{equation}\label{eq::criteria_pruning}
\min_{S\subseteq \Sigma} \; \left\{d(\hat{u}_S,\hat{u}_\Sigma) \text{ s.t. } |S|\leq n \right\}\enspace,
\end{equation}
where the function $d(\cdot)$ can be either 
\begin{enumerate}[label=(\roman*)]
	\item the $L_\infty$ norm $d_\infty(u,v)=\left\|u-v\right\|_{L_\infty(X)}$,
	\item the $L_1$ norm $d_1(u,v)=\left\|u-v\right\|_{L_1(X)}$,
	\item and the $J$-based criterion
	 $d_J(u,v)=J(v,\alpha^{\shortminus 1}\odot \nabla v) - J(u,\alpha^{\shortminus 1}\odot\nabla u)\enspace.$
\end{enumerate}
The third case corresponds to the maximization of the function $J$, where $\alpha^{\shortminus 1}\odot q=\nabla u$ thanks to~\Cref{prop::rochet_87}.

\begin{theorem}[\cite{Gaubert_2011}]\label{theorem::gruber}
Let $X\subseteq \bbR^d$ and $v: X \to \bbR$ strongly convex of class $\calC^2$. Then, both $L_1$ and $L_\infty$ approximation errors are $ \Omega\left(\frac{1}{n^{2/d}}\right)$ as $n\to\infty$.
\end{theorem}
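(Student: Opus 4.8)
The statement is a lower bound valid for \emph{every} approximant: no family of $n$ affine functions can bring the $L_1$ or $L_\infty$ error below a constant times $n^{-2/d}$. The plan is to reduce this to a covering estimate driven by the curvature of $v$. Since $v$ is $\calC^2$ and strongly convex on the compact set $X$, the smallest eigenvalue of $\nabla^2 v$ is continuous and strictly positive, hence bounded below: $\nabla^2 v(x)\succeq \mu I$ on $X$ for some $\mu>0$. Because $v$ is convex, its best lower approximation by $n$ affine functions is attained with supporting hyperplanes, so I first treat minorants $\hat u_i\le v$, write $\hat u_S=\max_i \hat u_i\le v$, and study the nonnegative error $g:=v-\hat u_S=\min_i w_i$, where $w_i:=v-\hat u_i\ge 0$.

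The geometric core is a volume bound on sublevel sets. For each $i$, since $\hat u_i$ is affine we have $\nabla^2 w_i=\nabla^2 v\succeq \mu I$, so $w_i$ is $\mu$-strongly convex. I would show that, for every $t>0$, the set $\{x\in X:\ w_i(x)\le t\}$ has diameter at most $\sqrt{8t/\mu}$, and hence volume at most $c_d\,(t/\mu)^{d/2}$ for a purely dimensional constant $c_d$. Indeed, given two points $x,y$ in this set, convexity of $X$ puts their midpoint in $X$, and strong convexity gives $0\le w_i(\tfrac{x+y}{2})\le \tfrac12\bigl(w_i(x)+w_i(y)\bigr)-\tfrac{\mu}{8}|x-y|^2\le t-\tfrac{\mu}{8}|x-y|^2$, forcing $|x-y|\le\sqrt{8t/\mu}$.

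With this estimate the two bounds follow by balancing $n$ caps of radius $\sim\sqrt{\varepsilon/\mu}$ against $\mathrm{vol}(X)$. For $L_\infty$, set $\varepsilon=\|g\|_{L_\infty(X)}$; since $g=\min_i w_i\le\varepsilon$ pointwise, the sets $A_i:=\{w_i\le\varepsilon\}$ cover $X$, so $\mathrm{vol}(X)\le\sum_{i=1}^n\mathrm{vol}(A_i)\le n\,c_d(\varepsilon/\mu)^{d/2}$, whence $\varepsilon\ge\mu\,(\mathrm{vol}(X)/c_d)^{2/d}\,n^{-2/d}=\Omega(n^{-2/d})$. For $L_1$, the same volume estimate bounds the distribution function of $g$, namely $F(\delta):=\mathrm{vol}(\{g\le\delta\})\le n\,c_d(\delta/\mu)^{d/2}$; the layer-cake formula then gives $\|g\|_{L_1(X)}=\int_0^\infty\bigl(\mathrm{vol}(X)-F(\delta)\bigr)\,\mathrm{d}\delta\ge\int_0^{\delta_0}\bigl(\mathrm{vol}(X)-n\,c_d(\delta/\mu)^{d/2}\bigr)\,\mathrm{d}\delta$, where $\delta_0$ is the value at which the upper bound on $F$ reaches $\mathrm{vol}(X)$. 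A direct computation evaluates this to $\tfrac{d}{d+2}\,\mathrm{vol}(X)\,\delta_0$ with $\delta_0=\Theta(n^{-2/d})$, giving again $\Omega(n^{-2/d})$.

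The routine parts are the strong-convexity diameter inequality and the layer-cake computation. The two points needing care are the uniform positive curvature bound $\mu>0$ (which uses compactness of $X$ and the $\calC^2$ hypothesis) and the reduction to minorants. For arbitrary, not necessarily minorant, affine functions the $L_\infty$ argument still applies: from $\hat u_i\le\hat u_S$ one gets $w_i\ge g\ge-\varepsilon$ on $X$, so the sublevel sets $\{w_i\le\varepsilon\}$ keep diameter $O(\sqrt{\varepsilon/\mu})$ and the covering argument is unchanged. The main conceptual obstacle is simply to recognize the problem as this curvature-driven covering balance — the mechanism behind the Gruber-type exponent $2/d$ — rather than as a delicate estimate of the optimal constant (which would instead bring in the affine surface area $\int_X(\det\nabla^2 v)^{1/2}\,\dx$).
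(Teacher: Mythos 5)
First, a point of reference: the paper does not prove this statement itself --- it is imported verbatim from~\cite{Gaubert_2011}, which in turn rests on Gruber's asymptotic estimates for polyhedral approximation of convex bodies. So there is no internal proof to compare against; your argument has to stand on its own. The strategy you chose (uniform curvature bound $\nabla^2 v\succeq \mu I$, diameter/volume bound on sublevel sets of the $\mu$-strongly convex functions $w_i=v-\hat u_i$, then a covering/layer-cake balance) is exactly the mechanism behind the $n^{-2/d}$ exponent, and your $L_\infty$ part is complete and correct: the midpoint inequality, the isodiametric volume bound, the covering of $X$ by the sets $\{w_i\le\varepsilon\}$, and --- importantly --- the extension to non-minorant approximants via $w_i\ge g\ge-\varepsilon$ all go through. (You do implicitly need $X$ to be a convex body with positive volume, which matches the paper's setting of a full-dimensional compact polyhedron.)

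The genuine gap is in the $L_1$ case for approximants that are not minorants. You dismiss this with the claim that ``the best lower approximation by $n$ affine functions is attained with supporting hyperplanes,'' but the theorem concerns the best approximation, not the best \emph{lower} approximation, and the optimal $L_1$ approximant by a max of affine functions generically crosses $v$ (already for $n=1$, $d=1$, $v(x)=x^2$). Your patch for $L_\infty$ does not transfer: there you could use $w_i\ge-\varepsilon$ with $\varepsilon=\|g\|_{L_\infty}$, but in the $L_1$ argument no pointwise lower bound on $w_i$ is available, and the relevant set is the slab $\{|w_i|\le\delta\}$ rather than the cap $\{0\le w_i\le\delta\}$. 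For a $\mu$-strongly convex $w_i$ whose minimum $m_i$ is well below $-\delta$, this slab is an annular shell around the convex body $\{w_i\le-\delta\}$, and its volume scales like $\delta\,(-m_i)^{(d-2)/2}\mu^{-d/2}$, which for $d\ge 3$ is much larger than the $c_d(\delta/\mu)^{d/2}$ you feed into the distribution-function bound $F(\delta)\le n\,c_d(\delta/\mu)^{d/2}$. So the layer-cake computation breaks for such indices. The statement is still true --- intuitively, an affine piece that overshoots $v$ by a large amount already forces a large $L_1$ error on the region where it dominates, so such pieces can be controlled separately --- but making that precise requires an additional argument (e.g.\ a case split on $m_i$, using a $\calC^2$ upper bound $\nabla^2 v\preceq LI$ to lower-bound the volume of $\{w_i\le m_i/2\}$, or a reduction showing one may pay only a constant factor to replace each $\hat u_i$ by a tangent plane). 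As written, the $L_1$ half of the proof only covers minorant approximants.
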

Theorem~\ref{theorem::gruber} exhibits an error rate identical to the complexity bound proved in~\cite{Bergemann_2021} in a different setting. 

We define the \emph{importance metric} of basis function $i$ as 
\begin{equation}\label{eq::nu_i}
\nu(S,i) = d(\hat{u}_{S\backslash \{i\}},\hat{u}_S)\enspace.
\end{equation}
This corresponds to an incremental version of the criteria~\eqref{eq::criteria_pruning}.
For the $L_\infty$ and $L_1$ case, if $\nu(S,i) = 0$, then the $i$-th basis function does not contribute to the max-sum.
Otherwise, if $\nu(S,i)>0$, then it expresses the maximal difference between the shape of $\hat{u}_S$ with and without $\hat{u}_i$, depending on the criterion. For the criterion $d_J$, it expresses the loss of revenue for the principal when contract $i$ is removed.

\subsection{Specific case: minimizing $L_\infty$ error}
For a $L_\infty$ approximation error, the importance metric~\eqref{eq::nu_i} can be computed by solving a linear program, see~\cite{Gaubert_2011}:
\begin{equation}\label{eq::pnu_i}
\tag{$P^{S}_i$}
\begin{aligned}
\max_{x\in X,\,\nu} &\quad\nu\\
\text{s.t}&\quad\forall j \in S\backslash\{i\},\quad \hat{u}_i(x) - \hat{u}_j(x) \geq \nu \quad(\lambda_{ij})
\end{aligned}
\end{equation}
In~\eqref{eq::pnu_i}, we denote by $(\lambda_{ij})_j$ the dual variable associated with each constraint. The set of saturated constraints is then characterized by the positive variables $\lambda_{ij}$.

\begin{algorithm}[!ht]
\small
\caption{Pruning for $L_\infty$ importance metric}\label{alg:pruning}
\begin{algorithmic}[1]
\Require $n$\Comment{Desired number of contracts}
\State $S\gets \Sigma$ \Comment{Indices of kept contracts}
\State $I\gets \Sigma$ \Comment{Indices of problems to re-compute}

\For{$t=1:|\Sigma| - n$}
	\For{$i\in I$}
		\State $\nu_{i},\lambda_i \gets$ solution of $(P^{S}_i)$
		\State $J_i\gets \{j\in S\backslash \{i\} \mid \lambda_{ij} > 0\}$
	\EndFor
	\State $r\gets\argmin_{i\in S} \nu_i$ \Comment{Contract to remove}
	\State $S \gets S \backslash \{r\}$
	\State $I \gets \{i\in S \mid r\in J_i \}$
\EndFor
\State\Return $S$
\end{algorithmic}
\end{algorithm}
\Cref{alg:pruning} describes a \emph{greedy descent} procedure: we start from the complete set of contracts $S$, and iteratively remove the less important contract exploiting a fast local update of the importance metric. Compared with~\cite{Gaubert_2011}, we take advantage of the linearity of the basis functions $\hat{u}_i$ to exploit the optimal dual variables $\lambda_{ij}$ in the linear program~\eqref{eq::pnu_i}:
\begin{prop}[Local update]Let $\lambda_{ij}$ be the optimal dual variables in~\eqref{eq::pnu_i} for a contract $i\in S$. Then, the importance metric of $i$ stays \emph{unchanged} when we remove a contract $j\in S$ s.t. $\lambda_{ij}=0$, i.e.,
$\nu(S\backslash\{j\},i) = \nu(S,i)$.
\label{prop::local_update_infty}
\end{prop}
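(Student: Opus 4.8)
The plan is to recognize that the importance metric $\nu(S,i)$ of \eqref{eq::nu_i} is, up to a harmless positive part, the optimal value of the linear program \eqref{eq::pnu_i}, and then to argue that deleting a constraint carrying a zero dual multiplier changes neither that optimal value nor the optimal primal point. Since $\hat{u}_{S\setminus\{i\}}\le \hat{u}_S$ pointwise, one has $\hat{u}_S(x)-\hat{u}_{S\setminus\{i\}}(x)=\max\left(0,\min_{j\in S\setminus\{i\}}(\hat{u}_i(x)-\hat{u}_j(x))\right)$, whence $\nu(S,i)=\max\left(0,\val(P^{S}_i)\right)$, where $\val(P^{S}_i)=\max_{x\in X}\min_{j\in S\setminus\{i\}}(\hat{u}_i(x)-\hat{u}_j(x))$ is the value of \eqref{eq::pnu_i}. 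Passing from $S$ to $S\setminus\{j\}$ for some $j\neq i$ amounts to deleting exactly the $j$-th constraint of \eqref{eq::pnu_i}; since the outer $\max(0,\cdot)$ is monotone, it suffices to show that this deletion leaves $\val(P^{S}_i)$ unchanged when $\lambda_{ij}=0$.

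First I would record the KKT certificate of \eqref{eq::pnu_i}. Writing $X=\{x : Ax\le b\}$ and using $\hat{u}_i(x)-\hat{u}_j(x)=\left<\hat{q}_i-\hat{q}_j,x\right>-(\hat{p}_i-\hat{p}_j)$, an optimal primal pair $(x^\star,\nu^\star)$ comes with multipliers $(\lambda_{ij})_{j}\ge 0$ for the $j$-constraints and $\mu\ge 0$ for $Ax\le b$, satisfying stationarity in $\nu$, namely the normalization $\sum_{j}\lambda_{ij}=1$, stationarity in $x$, namely $\sum_{j}\lambda_{ij}(\hat{q}_i-\hat{q}_j)=A^{\!\top}\mu$, together with primal/dual feasibility and complementary slackness.

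The key step is then the following. Fix $j_0\neq i$ with $\lambda_{ij_0}=0$ and consider the reduced program $(P^{S\setminus\{j_0\}}_i)$, obtained by dropping the $j_0$-constraint. I claim that $(x^\star,\nu^\star)$, together with the truncated multipliers $(\lambda_{ij})_{j\neq j_0}$ and $\mu$, is a KKT point of the reduced program. Indeed, deleting the $j_0$-term from the two stationarity identities is harmless precisely because $\lambda_{ij_0}=0$: the normalization $\sum_{j\neq j_0}\lambda_{ij}=1$ and the $x$-stationarity $\sum_{j\neq j_0}\lambda_{ij}(\hat{q}_i-\hat{q}_j)=A^{\!\top}\mu$ both persist, nonnegativity is inherited, and complementary slackness for the surviving constraints is untouched. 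Since \eqref{eq::pnu_i} is a linear program, these conditions are sufficient for optimality, so $(x^\star,\nu^\star)$ solves $(P^{S\setminus\{j_0\}}_i)$ and $\val(P^{S\setminus\{j_0\}}_i)=\nu^\star=\val(P^{S}_i)$. Applying the outer positive part yields $\nu(S\setminus\{j_0\},i)=\nu(S,i)$.

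The only genuine obstacle is the verification in the last paragraph that dropping a zero-weight constraint preserves the stationarity relations of the dual; everything else is bookkeeping. One may instead split the conclusion into two one-line bounds: $\val(P^{S\setminus\{j_0\}}_i)\ge\val(P^{S}_i)$ because the feasible set enlarges when a constraint is removed, and $\val(P^{S\setminus\{j_0\}}_i)\le\val(P^{S}_i)$ by weak duality, since the truncated multiplier vector is dual-feasible for the reduced program with the same objective value. This avoids invoking sufficiency of KKT explicitly. A minor point worth flagging is that the argument is insensitive to whether $\hat{u}_i$ actually contributes to the supremum, i.e.\ to the outer $\max(0,\cdot)$, so no separate case analysis is required; this is exactly the property exploited by the incremental update set $I\gets\{i\in S \mid r\in J_i\}$ in \Cref{alg:pruning}.
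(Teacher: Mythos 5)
Your proof is correct. The paper states this proposition without an explicit proof, but the mechanism it alludes to --- exploiting the optimal dual variables of the linear program \eqref{eq::pnu_i} --- is exactly what you formalize: the identity $\nu(S,i)=\max\bigl(0,\val(P^{S}_i)\bigr)$, the monotonicity of the optimal value under deletion of a constraint, and weak duality applied to the truncated multiplier vector (whose normalization $\sum_{j}\lambda_{ij}=1$, stationarity in $x$, and dual objective are all unaffected by dropping a zero-weight term) together yield $\val(P^{S\backslash\{j_0\}}_i)=\val(P^{S}_i)$ and hence the claim.
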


\Cref{prop::local_update_infty} ensures the correctness of~\Cref{alg:pruning}, where  we only re-compute at each iteration the values $\nu_i$ for a very small subset of $\Sigma$. This leads to a huge gain in computation time, see~\Cref{sec::results}.

\subsection{$L_1$ and $J$-based approximation error}
Contrary to the $L_\infty$ case, the computation exploits the geometric structure. Indeed, the representation of the function $\hat{u}_S$ as a maximum of basis functions' $\hat{u}_j,j\in S$ induces a polyhedral complex, in which every function $\hat{u}_i$ determines a polyhedral cell $\mathcal{C}_i$, consisting of the types $x\in X$ such that $\hat{u}_S(x)=\hat{u}_i(x)$. Removing a basis function
$\hat{u}_i$ from the supremum $\hat{u}_S = \sup_{j\in S} \hat{u}_j$ yields a local modification of the latter supremum, concentrated on a neighborhood
of the cell $\mathcal{C}_i$. Hence, we will need to compute at each iteration the neighbors of each contract cell $\mathcal{C}_i$ with $i\in S$.
This idea may be compared with the notion of Delaunay triangulation
associated to a Voronoï diagram~\cite{Fortune_1995}.
During the algorithm, we keep in memory two sets: $J_{i}$ represents the neighboring cells of cell $i$, and $V_{i}$ is the vertex representation of cell $i$. Two routines are used for both the $L_1$ and $J$-based criterion:
\begin{itemize}
\item[$\diamond$] {\sc Vrep}$(S,i)$ returns the V-representation (representation by vertices) of the polyhedral cell $\mathcal{C}_i$ induced by contract $i$ for a given set $S$, taking as input the H-representation (representation by half-spaces) $\{x\in X\mid \hat{u}_i(x)\geq \hat{u}_j(x),\; \forall j\in S\}$ of the cell $i$. This is done using the revised {\em reverse search} algorithm
  implemented in the library \texttt{lrs}, see~\cite{Avis_2000}.
\item[$\diamond$] {\sc updateNeighbors}$((V_S)_{i\in I})$ updates the neighbors of each cell $i\in I$ knowing the vertex representation.
\end{itemize}

\begin{prop}[Local update]The importance metric of a contract $i\in S$ stays \emph{unchanged} when we remove  a contract $j$ which is not in the neighborhood of $i$, i.e.,
$\nu(S\backslash\{j\},i) = \nu(S,i)$ for $j\in S\backslash J_i$.
\label{prop::local_update_1}
\end{prop}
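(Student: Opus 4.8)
The plan is to express the importance metric as an integral carried by the single cell $\mathcal{C}_i$, and then to show that deleting a non-neighboring contract alters neither that cell nor the integrand on it. First I would record that $\hat{u}_S=\max(\hat{u}_i,\hat{u}_{S\setminus\{i\}})\geq \hat{u}_{S\setminus\{i\}}$, so that $\hat{u}_S-\hat{u}_{S\setminus\{i\}}=(\hat{u}_i-\hat{u}_{S\setminus\{i\}})^{+}$ vanishes off $\mathcal{C}_i$; for the $L_1$ criterion this gives $\nu(S,i)=\int_{\mathcal{C}_i}\big(\hat{u}_i(x)-\max_{k\in S\setminus\{i\}}\hat{u}_k(x)\big)\,\dx$. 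The statement then reduces to two geometric facts, valid for $j\in S\setminus J_i$: (a) $\mathcal{C}_i$ is unchanged up to a null set when $\hat{u}_j$ is dropped, and (b) the runner-up $\max_{k\in S\setminus\{i\}}\hat{u}_k$ is unchanged on $\mathcal{C}_i$, i.e. $\hat{u}_j$ is nowhere the second-largest map on a positive-measure subset of $\mathcal{C}_i$.

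Second, I would reduce both (a) and (b) to one lemma about the set $D:=\{x\in X:\hat{u}_i(x)\text{ and }\hat{u}_j(x)\text{ are the two largest among }(\hat{u}_k(x))_{k\in S}\}$, since both the enlargement of $\mathcal{C}_i$ after dropping $\hat{u}_j$ and the modification of the runner-up on $\mathcal{C}_i$ sit inside $D$. The lemma is: if every kept cell is full-dimensional, then $D$ is Lebesgue-null iff $\mathcal{C}_i\cap\mathcal{C}_j$ is not a facet, i.e. iff $j\notin J_i$. To see it, consider the polyhedron $P:=\{x\in X:\hat{u}_j(x)\geq\hat{u}_k(x)\ \forall k\in S\setminus\{i,j\}\}$, which the hyperplane $\{\hat{u}_i=\hat{u}_j\}$ splits (up to a null set) into $\mathcal{C}_i\cap P$ and $\mathcal{C}_j=P\cap\{\hat{u}_j>\hat{u}_i\}$. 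If $D$ has positive measure, then w.l.o.g. $\mathcal{C}_i\cap P$ is full-dimensional; since $\mathcal{C}_j$ is full-dimensional too, the splitting hyperplane meets $\Int P$ and its trace is a $(d-1)$-face contained in $\mathcal{C}_i\cap\mathcal{C}_j$, i.e. $j\in J_i$. Contrapositively, $j\notin J_i$ makes $D$ null, which is exactly (a)--(b) and closes the $L_1$ case by equating the two integrands a.e.

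A technical point I must not skip is the irredundancy hypothesis used above: every kept cell $\mathcal{C}_k$, $k\in S$, must be full-dimensional. Without it a contract could be the runner-up on a positive-measure set while $\mathcal{C}_j$ itself is negligible, so that $\mathcal{C}_i\cap\mathcal{C}_j$ stays low-dimensional and (b) fails. This is harmless in the algorithm, since any $\hat{u}_j$ that is never the unique maximum has $\nu(S,j)=0$ and is removed first; I would state this reduction as a standing assumption.

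For the $J$-based criterion I would write $\nu(S,i)=\int_{\mathcal{C}_i}\big[L(x,\hat{u}_i,\alpha^{-1}\odot\hat{q}_i)-L(x,\hat{u}_{S\setminus\{i\}},q_{S\setminus\{i\}})\big]\,\dx-\big[C(m_S)-C(m_{S\setminus\{i\}})\big]$, with $m_T:=\int_X M(x,q_T)\,\dx$ and $q_T:=\alpha^{-1}\odot\nabla\hat{u}_T$. By (a)--(b) the localized $L$-integral over $\mathcal{C}_i$ and the cell-local increment $\mu_i:=m_S-m_{S\setminus\{i\}}=\int_{\mathcal{C}_i}\big[M(x,\alpha^{-1}\odot\hat{q}_i)-M(x,q_{S\setminus\{i\}})\big]\,\dx$ are both invariant under deleting $j$. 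The hard part will be the coupling term: one must compare $C(m_S)-C(m_{S\setminus\{i\}})$ with $C(m_{S\setminus\{j\}})-C(m_{S\setminus\{i,j\}})$, i.e. $C(a+\mu_i)-C(a)$ for $a=m_{S\setminus\{i\}}$ versus $a=m_{S\setminus\{i,j\}}$; because deleting $j$ genuinely shifts the aggregate $\int_X M$, these agree for every configuration only when $C$ is affine. Thus the identity is exact for $L_1$ and for $d_J$ with affine coupling, while for strictly convex $C$ it fails by the second-order term $[C(a+\mu_i)-C(a)]-[C(b+\mu_i)-C(b)]$ with $b=m_{S\setminus\{i,j\}}$, which is $O(\mu_i\,|a-b|)$. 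I would therefore either restrict the exact statement to affine coupling, or replace the coupling contribution by its linearization $C'(m_S)\,\mu_i$, whose factors are both cell-local and hence invariant, recovering an exact local update for the implemented importance measure.
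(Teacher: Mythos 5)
The paper states this proposition without proof, so there is no argument of the authors' to compare yours against; what follows is an assessment of your proposal on its own terms and against the way the proposition is actually used in Algorithms~\ref{algo::up1}--\ref{algo::update_nu_J}. For the $L_1$ criterion your argument is correct and is, as far as one can tell, exactly the locality argument the paper relies on implicitly: the support of $\hat{u}_S-\hat{u}_{S\setminus\{i\}}$ is $\mathcal{C}_i$, and your reduction to the set $D$ where $\hat{u}_i$ and $\hat{u}_j$ are the two largest functions cleanly packages both the invariance of the cell and the invariance of the runner-up; the splitting of $P=\{x:\hat{u}_j\geq\hat{u}_k,\ k\in S\setminus\{i,j\}\}$ by the hyperplane $\{\hat{u}_i=\hat{u}_j\}$ is the right way to see that a positive-measure $D$ forces a shared facet. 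Your caveat about full-dimensional cells is legitimate and harmless, for the reason you give (redundant contracts have $\nu=0$ and are pruned first).

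Your observation about the $J$-based criterion is the most valuable part of the proposal, and it is correct: with $C$ strictly convex (\Cref{hypo::L_and_M}), the identity $\nu(S\setminus\{j\},i)=\nu(S,i)$ fails in general, because $C(m_{S\setminus\{j\}}+\mu_i)-C(m_{S\setminus\{j\}})\neq C(m_S+\mu_i)-C(m_S)$ once removing $j$ shifts the aggregate $m$. The proposition as literally stated therefore holds only for $d_1$ (and $d_\infty$), while for $d_J$ what is actually invariant under deletion of a non-neighbor is the local data: the cell geometry $V_i$, the increments $\delta_L$ and $\delta_M$. Note that the paper's own Algorithm~\ref{algo::update_nu_J} tacitly concedes this point: unlike Algorithm~\ref{algo::update_nu_1}, it loops over all $i\in S$ rather than over $I$, and recomputes the global aggregate $M_0$ at every iteration before reassembling $\nu_i=\delta_L-C(M_0)+C(M_0+\delta_M)$; only the calls to {\sc Vrep} are restricted to neighbors of the removed contract. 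So your proposed fix (state the exact identity for the local quantities, or for affine $C$) is precisely what the implementation does, and the proposition's statement would benefit from being rephrased accordingly. No gap in your reasoning; if anything, you have identified an imprecision in the statement itself.
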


\Cref{prop::local_update_1} ensures the correctness of Algo.~\ref{algo::up1}, where we only re-compute vertex representations for a small subset of contracts (corresponding to the neighboring cells of the lastly removed contract, see line 8 of the algorithm). This local update is illustrated in~\Cref{fig::ex_premerge}. The update of the importance metric in line 11 differs between the $L_1$ and $J$-based cases, and is described in Algos.~\ref{algo::update_nu_1}--\ref{algo::update_nu_J}. 
In Algo.~\ref{algo::update_nu_1}, the integral that appears in the computation of $\nu_i$ can be evaluated analytically using Green's formula,
as it integrates a linear form over a polytope, see~Appendix~\ref{app::green}. In Algo.~\ref{algo::update_nu_J}, $\delta_L$ can be computed in the same way. For $M_0$ and $\delta_M$, this generally involves the integration of the function $x\mapsto M(x,\hat{q}_i)$. In the present application, this function is linear, and so the direct integration is possible, see~\eqref{eq::L}--\eqref{eq::M}.

\begin{algorithm}[!ht]
\small
\caption{Pruning with local update (for $L_1$ and $J$-based)}\label{algo::up1}
\begin{algorithmic}[1]
\Require $n$\Comment{Desired number of contracts}
\For{$i\in \Sigma$}
        \State $V_{i}\gets$ {\sc Vrep}$(\Sigma,i)$ \Comment{Vertex representation}
\EndFor
\State $S\gets \Sigma$ \Comment{Indices of kept contracts}
\State $I\gets \Sigma$ \Comment{Indices of problems to re-compute}

\For{$t=1:|\Sigma| - n$}
	\State $(J_i)_{i\in I}\gets$ {\sc updateNeighbors}$((V_{i})_{i\in I})$
	\For{$i\in I$, $j\in J_i$}
		\State $F_{j,-i}\gets$ {\sc Vrep}$(S\backslash\{i\},j)$ \Comment{Future cells}
	\EndFor
        \State $\nu \gets$ {\sc updateImpMetric}$(I,(V_i)_{i\in S}, (F_{j,\shortminus i})_{j\in J_i, i\in S})$
	
	\State $r\gets\argmin_{i\in S} \nu_{i}$ \Comment{Contract to remove}
        \State $S \gets S \backslash \{r\}$
        \For{$j\in J_r$}
                \State $V_{j}\gets F_{j,-r}$ \Comment{Update vertex representation}
        \EndFor
	\State $I \gets J_r$
\EndFor
\State\Return $S$
\end{algorithmic}
\end{algorithm}
\vspace{-1\baselineskip}
\begin{figure}[!ht]
\centering
\includegraphics[width = 0.8\linewidth,clip=true,trim=1cm .7cm 1cm .5cm]{./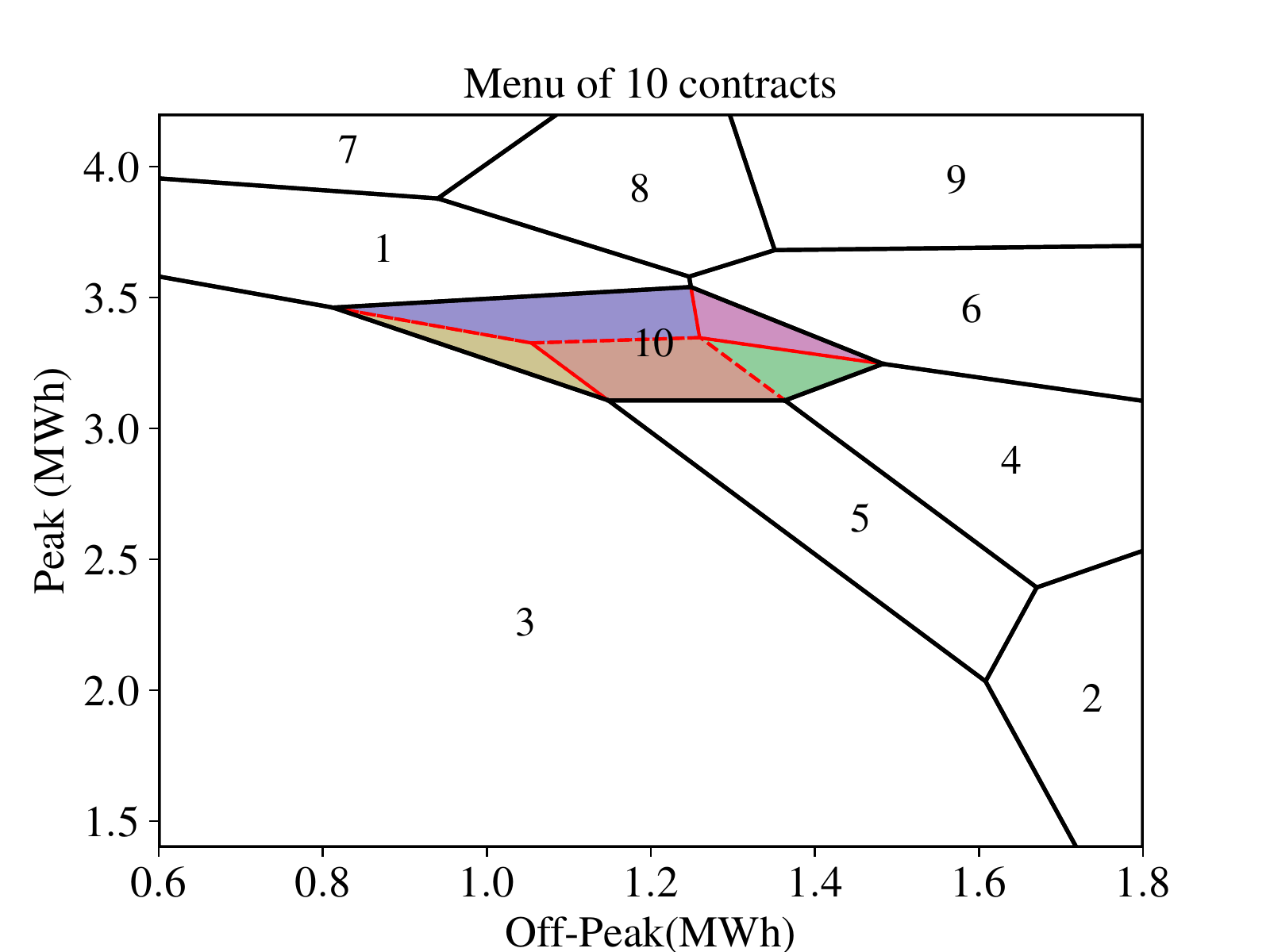}
\caption{Evaluation of contract by dividing into subregions ($d=2$)\\
The green polyhedron corresponds to $F_{4,-10}\cap V_{10}$.}\label{fig::ex_premerge}
\end{figure}

\begin{subalgorithms}
\small
\begin{algorithm}[!ht]
\small
\caption{{\sc updateImpMetric} \; ($L_1$ error)}\label{algo::update_nu_1}
\begin{algorithmic}[1]
\Require $I$, $(V_i)_{i\in S}$, $(F_{j,-i})_{i\in I, j\in J_i}$
\For{$i\in I$}\Comment{Update metric on cells}
        \State $\nu_i \gets \sum_{j\in J_i}\iint_{F_{j,-i}\cap V_i} (\hat{u}_i(x) - \hat{u}_j(x)) \dx$
\EndFor
\State\Return $\nu$
\end{algorithmic}
\end{algorithm}
\begin{algorithm}[!ht]
\small
\caption{{\sc updateImpMetric}\; ($J$-based error)}\label{algo::update_nu_J}
\begin{algorithmic}[1]
\Require $I$, $(V_i)_{i\in S}$, $(F_{j,-i})_{i\in I, j\in J_i}$
\State $M_0\gets \sum_{i\in S}\iint_{V_i} M(x,\hat{q}_i) \dx$
\For{$i\in S$}\Comment{Update metric on cells}
        \State $\delta_L \gets \displaystyle\sum_{j\in J_i}\iint_{F_{j,-i}\cap V_{i}} \hspace{-.8cm}L(x,\hat{u}_i(x),\hat{q}_i) - L(x,\hat{u}_j(x),\hat{q}_j) \dx$
        \State $\delta_M \gets  \sum_{j\in J_i}\iint_{F_{j,-i}\cap V_{i}} M(x,\hat{q}_j) - M(x,\hat{q}_i) \dx$
        \State $\nu_i \gets \delta_L - C(M_0) + C(M_0+\delta_M)$
\EndFor
\end{algorithmic}
\end{algorithm}
\end{subalgorithms}

\begin{prop}[Critical steps]\label{prop::critical_steps}
Let $m$ be the maximum number of neighbors of a polyhedral cell during the execution of the algorithm (for all $t$ and $i$, $|J_i|\leq m$). Then, 
\begin{itemize}
\item[$\diamond$] The number of linear programs $(P^S_i)$ solved
  in Algo.~\ref{alg:pruning} is in $O(m|\Sigma|)$,
\item[$\diamond$]
  The number of computations of a vertex representation of a polyhedral
  cell (calls to {\sc Vrep}$(S,i)$ / reverse search) is in $O(m^2|\Sigma|)$.
\end{itemize}
\end{prop}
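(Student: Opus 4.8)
The plan is to count, for each algorithm separately, how many times the corresponding expensive primitive is invoked, and to control at every iteration of the outer loop (which runs $|\Sigma|-n$ times) the size of the set $I$ of contracts whose importance metric must be recomputed. The two local-update results, \Cref{prop::local_update_infty} and \Cref{prop::local_update_1}, are precisely what keep $I$ small after the first iteration.

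\emph{First bullet (\Cref{alg:pruning}).} I would first observe that the number of linear programs solved at a given iteration is exactly $|I|$. At the initial iteration $I=\Sigma$, contributing $|\Sigma|$ solves. For every subsequent iteration, \Cref{prop::local_update_infty} guarantees that $\nu(S,i)$ is unaffected by the removal of a contract $r$ whenever $\lambda_{ir}=0$; hence the algorithm correctly restricts the recompute set to $I=\{i\in S : r\in J_i\}$, i.e.\ the cells having the just-removed $r$ among their active (neighbouring) constraints. The key estimate is then $|I|\le m$: the optimizer of $(P^S_i)$ lies in the cell $\mathcal{C}_i$, its active constraints $j\in J_i$ are neighbouring cells of $\mathcal{C}_i$, so $r\in J_i$ forces $\mathcal{C}_i$ to be a neighbour of $\mathcal{C}_r$, and by symmetry of adjacency the number of such $i$ is at most the number of neighbours of $\mathcal{C}_r$, which is $\le m$. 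Summing over the $|\Sigma|-n$ iterations gives $|\Sigma| + (|\Sigma|-n-1)\,m = O(m|\Sigma|)$.

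\emph{Second bullet (\Cref{algo::up1}).} Here I would separate the calls to {\sc Vrep} into the initialization, which computes $V_i$ for every $i\in\Sigma$ and costs exactly $|\Sigma|$ calls, and those performed inside the main loop to build the future cells $F_{j,-i}$, which cost $\sum_{i\in I}|J_i|$ per iteration. At the first iteration $I=\Sigma$, so this sum is bounded by $|\Sigma|\max_i|J_i|\le m|\Sigma|$. For every later iteration the algorithm resets $I\gets J_r$, so $|I|\le m$ by hypothesis, and \Cref{prop::local_update_1} certifies that this is indeed the correct recompute set; together with $|J_i|\le m$ this gives $\sum_{i\in I}|J_i|\le m^2$ per iteration. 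Summing the initialization, the first iteration and the remaining $|\Sigma|-n-1$ iterations yields $|\Sigma| + m|\Sigma| + (|\Sigma|-n-1)\,m^2 = O(m^2|\Sigma|)$.

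The routine part is the arithmetic of summing per-iteration contributions of size $O(m)$ or $O(m^2)$ over the $|\Sigma|-n = O(|\Sigma|)$ iterations of the outer loop; the only genuinely delicate point is the in-degree bound $|\{i\in S : r\in J_i\}|\le m$ used in the first bullet. In the $L_1/J$ case the recompute set is reset to $J_r$ by construction, so the bound is immediate from the hypothesis $|J_i|\le m$; but in the $L_\infty$ case the recompute set is defined implicitly through the positive dual variables $\lambda_{ir}$, and I expect the main effort to lie in showing that this active-constraint adjacency is symmetric—equivalently, that $r\in J_i$ forces $\mathcal{C}_i$ to be a geometric neighbour of $\mathcal{C}_r$—so that the number of such $i$ is again controlled by the common neighbour bound $m$.
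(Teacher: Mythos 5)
The paper states this proposition without an explicit proof, so there is no official argument to measure yours against; your counting scheme is clearly the intended one. For the second bullet it is complete and correct: $|\Sigma|$ calls to {\sc Vrep} at initialization, at most $\sum_{i\in\Sigma}|J_i|\le m|\Sigma|$ during the first pass of the loop (where $I=\Sigma$), and at most $m^2$ per subsequent iteration since $I$ is reset to $J_r$ with $|I|\le m$ and each $i\in I$ triggers $|J_i|\le m$ calls, giving $O(m^2|\Sigma|)$ overall.

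For the first bullet you have correctly isolated the only nontrivial step, the in-degree bound $|\{i\in S: r\in J_i\}|\le m$, but your resolution of it is not sound as written. The hypothesis $|J_i|\le m$ bounds the \emph{out}-degree of the directed relation ``$\lambda_{ij}>0$'', and the in-degree of a vertex in a directed graph is not controlled by the maximum out-degree; nor is the active-constraint relation symmetric in general. Indeed $\lambda_{ir}>0$ means that $\hat{u}_r$ attains $\max_{j\neq i}\hat{u}_j$ at the maximizer of $\hat{u}_i-\hat{u}_{S\setminus\{i\}}$, and a basis function that is everywhere the runner-up (e.g.\ a slightly dominated contract, with $\mathcal{C}_r=\emptyset$ and $\nu_r<0$) can satisfy $\lambda_{ir}>0$ for every $i\in S$ while $|J_r|$ itself is small; removing it then forces $|S|-1$ LP re-solves in a single iteration, and your geometric step ``$r\in J_i$ forces $\mathcal{C}_i$ to be a neighbour of $\mathcal{C}_r$'' breaks down precisely because $\mathcal{C}_r$ may be empty. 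The bound therefore needs either (a) reading $m$ as the maximum degree of the \emph{symmetrized} adjacency relation — i.e.\ as a bound on $|\{i:r\in J_i\}|$ as well as on $|J_i|$, which is arguably what ``maximum number of neighbors of a polyhedral cell'' is meant to convey, and which makes the count $|\Sigma|+(|\Sigma|-n-1)\,m=O(m|\Sigma|)$ immediate — or (b) a separate argument valid only for non-dominated cells. You should say explicitly which of these you are invoking rather than asserting symmetry.
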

By comparison with~\Cref{prop::critical_steps}, a naïve implementation (full recomputation of the importance metric at each step) of the two algorithms would respectively lead to a number of critical steps in $O(|\Sigma|^2)$ and $O(m|\Sigma|^2)$. Each linear program $P^S_i$ can be solved
in polynomial time (by an interior point method).
Reverse search has an incremental running time of $O(|\Sigma|d)$ per
vertex if the input is nondegenerate, see~\cite{Avis_2000}.

\section{APPLICATION TO ELECTRICITY MARKETS}\label{sec::app}

\subsection{Price elasticity}
Let us consider a provider holding several contracts, each of them defined by a fixed price component $p\in\bbR$ (in \texteuro), and $d$ variable price components $z\in\bbR^d$ (in \texteuro/kWh). In France, the contracts
often take into account $d=2$ time periods, with different prices for Peak / Off-peak consumptions.
Moreover, the price coefficients $(p,z)$ of each contract are supposed to belong to a non-empty polytope $P\times Z\subset \bbR^{d+1}$:
\begin{hypo}\label{hypo::order_poly}
Let $p^-,p^+$ be in $\bbR_{>0}$ and $z^-,z^+$ be in $\bbR^d_{>0}$. Then,
$P=[p^-,p^+]$, and the polytope $Z$ is of the following form:
$$Z :=\left\{z^- \leq z\leq z^+\mid z_{i_1} \leq \kappa_{i_1,i_2} z_{i_2}\text{ for } i_1\leq_{\mathcal{P}} i_2\right\}\enspace,$$
where $\mathcal{P}$ is a \emph{partially ordered set} (poset) of $\{1,\hdots,d\}$, and $\leq_{\mathcal{P}}$ the ordering relation, and $\kappa_{i_1,i_2}>0$.
When $\kappa\equiv 1$, $z^-\equiv 0$ and $z^+\equiv 1$, $Z$ is known as an \emph{order polytope}~\citep{Stanley_1986}.
\end{hypo}
\Cref{hypo::order_poly} is natural for the electricity pricing problem: the price can be freely determined within a box (bounds), as long as some inequalities between peak price coefficients and off-peak price coefficients are fulfilled.

We suppose that each agent in the (infinite-size) population is characterized by a \emph{reference} consumption vector $\ce\in X\subset \bbR^d_{> 0}$. 
Here, supposing a continuum of agents is justified since we consider in the application case the population of a whole country.
We suppose that the consumption  is \emph{elastic} to prices, i.e., a consumer can deviate from its reference consumption $\ce$. In addition, we suppose that electricity elasticity can be captured into a utility-based framework, see e.g.~\citep{Samadi_2012} for the properties that the utility must satisfy. Here, we focus on isoelastic utilities: 
\begin{hypo}[Isoelastic utility function]
For a reference consumption $\ce$, the utility of consuming an amount of energy $x\in\bbR^d_{\geq 0}$ is depicted through a \emph{Constant Relative Risk Aversion} (CRRA,\cite{Pindyck_2012,Alasseur_2020}) or isoelastic utility:
\begin{equation}\label{eq::CRRA_function}
\mathcal{U}_{\ce}:x\in\bbR^d_{\geq 0}\mapsto \frac{1}{\eta}\sum_{i=1}^d \beta_{\ce i} (x_i)^\eta,\;\eta\in (\shortminus\infty,0)\cup(0,1]\enspace.
\end{equation}
The coefficient $\eta$ is called the \emph{risk aversion} coefficient. 
\end{hypo}
In this context, this elasticity measure depicts the easiness of a customer to adopt another energy source to fulfill his needs. 
In~\citep{Alasseur_2020}, the authors model the electric elasticity by this kind of utility function, and separate the case $\eta < 0$ and $\eta \in (0,1]$.
  The first regime ($\eta<0$) will model a household consumption: the satisfaction coming from consuming energy saturates to a maximum utility,
and a zero consumption is prohibited. In contrast, the second regime ($\eta\in(0,1]$) will represent the high flexibility of the industrial sector, which can adapt more easily its consumption according to price. 
We refer to~\citep{Niromandfam_2020_bis} and references therein for empirical studies on the intensity of the elasticity coefficient $\eta$. 

For a contract defined by price coefficients $(p,z)\in\bbR\times\bbR^d$, a consumer $\ce$ will optimize his consumption in order to maximize the \emph{welfare function}, obtained by subtracting the electricity cost to~\eqref{eq::CRRA_function}:
\begin{equation}
\label{eq::welfare_max}
\calU^*_{\ce}: (p,z)\in\bbR\times\bbR^d\mapsto \max_{x\in{\bbR_{\geq 0}}^d}\left\{\calU_{\ce}(x) -\left<x,z \right>\right\} - p \enspace.
\end{equation}
We denote by $\calU^*_{\ce}$ the welfare function as the maximization term in~\eqref{eq::welfare_max} corresponds to a Fenchel-Legendre transform up to a change of sign. As a consequence, $\calU^*_{\ce}$ is convex and nonincreasing. We now make the following assumption to fix the value of $\beta$:

\begin{hypo}\label{hypo::alpha}
The reference consumption $\ce\in\bbR^d$ is obtained for reference prices $\check{p}\in\bbR$ and $\check{z}\in\bbR^d$. 
\end{hypo}

Under~\Cref{hypo::alpha}, the optimal consumption of customer $\ce$ on period $i\in \{1,\hdots,d\}$, denoted $\calE_{\ce i}$, is given by
\begin{equation}\label{eq::optimal_energy}
\calE_{\ce i}(z) = \ce_i\left(z_i / \cz_i\right)^{\frac{-1}{1-\eta}}\; \geq 0\enspace,
\end{equation}
and the welfare function is given by 
\begin{equation}\label{eq::Ustar}
\calU^*_{\ce}(p,z) = \left(\tfrac{1}{\eta} - 1\right)\sum_{i=1}^d\ce_h\cz_h\left(z_h / \cz_h\right)^{\frac{-\eta}{1-\eta}} - p\enspace.
\end{equation}
\Cref{eq::optimal_energy,eq::Ustar} are obtained from the first order optimality condition (zero derivative) for~\eqref{eq::welfare_max} ($\beta_{\ce i} = \cz_h\left(\ce_h\right)^{1-\eta}$).

\subsection{Infinite-size menu of offers}
In this section, we relax the assumption of a finite number of contracts, by supposing that the provider is able to define \emph{as many offers as} consumers. Therefore, the infinite-size menu of offers can be represented by two functions $p: X\to\bbR$ and $z: X\to\bbR^d$, representing respectively the fixed price component and the variable price components.
Let us define the (weighted) invoice of a consumer as
\begin{equation}
\calL_{\ce}:(p,z)\in \bbR\times \bbR^d \mapsto (p+ \left<\calE_{\ce}(z),z\right>)\rho(\ce) \enspace,
\end{equation}
where $\rho(\check{x})\geq 0 $ represents the density of customers with reference consumption $\check{x}$. The provider's revenue maximization problem is then
\begin{subequations}\label{eq::mf_problem}
\begin{align}
\max_{p,z} & \;\calJ^1(p,z) - \calJ^2(z)\\
\text{s.t.}&\;\calU^*_{x}(p(x),z(x))\geq \calU^*_{x}(p(y),z(y)),\,\forall x,y\in X\label{eq::compatibility}\\
&\;\calU^*_{x}(p(x),z(x)) \geq R(x),\,\forall x\in X\label{eq::reservation}\\
&\;p(x)\in P,\;z(x)\in Z
\end{align}
\end{subequations}
where $\calJ^1(p,z)= \int_{ X} \calL_{x}(p(x),z(x)) \dx$ and  $\calJ^2(z)= C\left(\int_{X} \sum_{i=1}^d \calE_{xi}(z(x)) \rho(x) \dx \right)$.

\Cref{eq::compatibility,eq::reservation} are respectively the \emph{incentive-compatibility condition} and \emph{participation constraint}. Taking $C$ as a strictly convex increasing function of the global consumption is often considered in the literature. 
In particular, this cost function is often modeled as a piecewise linear function, see e.g.~\cite{Alekseeva_2019}, or as a quadratic function, see e.g.~\cite{Ackooij_2018}. In fact, the marginal cost to supply electricity is not constant and increases with the consumption. 
The convexity of the reservation utility is also a classical assumption, as this reservation utility should be a supremum over the utilities of alternative offers (each of them being linear function of the reference consumption).

Let us make the following change of variables:
$$q_{i}:= (z_{i}/\cz_{i})^{\frac{-\eta}{1-\eta}}\enspace.$$ 
Then, the consumption on period $i\in \{1,\hdots,d\}$ is a convex function of $q_i$, expressed as
$\mathfrak{E}_{\ce i}(q_{i}) = \ce_i[q_{i}]^{\frac{1}{\eta}}\enspace,$
and both the utility and the weighted invoice now read as linear functions of $p$ and $q$: defining $\alpha = (\eta^{\shortminus 1} - 1) \cz$,
\begin{equation}\label{eq::L}
\begin{aligned}
u(x) &:= \left<x , \alpha\odot q(x)\right> - p(x)\enspace,\\
L(x,u(x),q(x)) &:= \left(\tfrac{1}{\eta}\left<x, \cz \odot q(x)\right> - u(x)\right)\rho(x)\enspace,
\end{aligned}
\end{equation}

\begin{theorem}\label{theorem::mf_problem_2}
Under~\Cref{hypo::order_poly}, the provider's revenue maximization problem~\eqref{eq::mf_problem} is equivalent to a monopolist problem of the form~\eqref{eq::mf_problem_0}
with 
\begin{equation}\label{eq::M}
M(x,q(x)) := \rho(x)\sum_{i=1}^d x_i [q_i(x)]^{\tfrac{1}{\eta}}
\end{equation}
and, if $\eta <0$,
\begin{equation*}
Q = \left\{q\in\bbR^d\left\vert
\begin{split}
&\left(z^-/\cz\right)^{\frac{-\eta}{1-\eta}}\leq q \leq \left(z^+/\cz\right)^{\frac{-\eta}{1-\eta}} \\
& q_{i_1} \hspace{-.01\linewidth} \leq \left(\kappa_{i_1,i_2}\tfrac{\cz_{i_2}}{\cz_{i_1}}\right)^{\frac{-\eta}{1-\eta}} q_{i_2}\text{ for } i_1\leq_{\mathcal{P}} i_2
\end{split}\right.\right\},
\end{equation*}
otherwise, 
\begin{equation*}
Q = \left\{q\in\bbR^d\left\vert
\begin{split}
&\left(z^+/\cz\right)^{\frac{-\eta}{1-\eta}}\leq q \leq \left(z^-/\cz\right)^{\frac{-\eta}{1-\eta}} \\
& q_{i_1} \geq \left(\kappa_{i_1,i_2}\tfrac{\cz_{i_2}}{\cz_{i_1}}\right)^{\frac{-\eta}{1-\eta}} q_{i_2}\text{ for } i_1\leq_{\mathcal{P}} i_2
\end{split}\right.\right\}
\end{equation*}
\end{theorem}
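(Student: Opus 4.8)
The plan is to exhibit the stated equivalence as a pointwise change of variables $(p(x),z(x))\mapsto(u(x),q(x))$ that is a bijection between the feasible set of~\eqref{eq::mf_problem} and that of a problem of the form~\eqref{eq::mf_problem_0}, and that carries the objective $\calJ^1-\calJ^2$ onto $J(u,q)$. The core map is $q_i=(z_i/\cz_i)^{\frac{-\eta}{1-\eta}}$, already introduced before the statement, together with $u(x)=\left<x,\alpha\odot q(x)\right>-p(x)$; it is inverted by $z_i=\cz_i\,q_i^{\frac{-(1-\eta)}{\eta}}$ and $p(x)=\left<x,\alpha\odot q(x)\right>-u(x)$. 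First I would record that this power map is a strictly monotone bijection of $\bbR_{>0}$, increasing when $\eta<0$ and decreasing when $\eta\in(0,1)$, since the exponent $\frac{-\eta}{1-\eta}$ has the sign of $-\eta$; this dichotomy is precisely what produces the two cases in the conclusion.

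Next I would check that the transformation preserves values on both the constraints and the objective. Evaluating~\eqref{eq::Ustar} at the new variable and using $\alpha=(\eta^{\shortminus 1}-1)\cz$ gives $\calU^*_x(p(x),z(x))=\left<x,\alpha\odot q(x)\right>-p(x)=u(x)$, so the welfare function is exactly the utility of~\Cref{prop::rochet_87}. Substituting $p(y)=\left<y,\alpha\odot q(y)\right>-u(y)$ into the incentive constraint~\eqref{eq::compatibility}, which reads $u(x)\geq\left<x,\alpha\odot q(y)\right>-p(y)$, yields $u(x)-u(y)\geq\left<x-y,\alpha\odot q(y)\right>$ for all $x,y$, i.e.~\eqref{eq::i_compatibility}; the participation constraint~\eqref{eq::reservation} becomes~\eqref{eq::reservation_utility} immediately, and the admissibility $p(x)\in P$ matches $u(x)\in U_x$ through the definition of $U_x$ and the gradient relation $\nabla u=\alpha\odot q$. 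For the objective, the key computation is $\left<\calE_x(z),z\right>=\left<x,\cz\odot q\right>$: indeed $\calE_{xi}(z)=x_i q_i^{1/\eta}$ by~\eqref{eq::optimal_energy} and $z_i=\cz_i q_i^{\frac{-(1-\eta)}{\eta}}$, whose exponents add to $1$. Combined with $p=\left<x,\alpha\odot q\right>-u$ this turns $\calL_x$ into $L$ as in~\eqref{eq::L}, so $\calJ^1=\int_X L\,\dx$, while $\calE_{xi}=x_i q_i^{1/\eta}$ turns $\calJ^2$ into $C(\int_X M\,\dx)$ with $M$ as in~\eqref{eq::M}; hence the objective coincides with $J(u,q)$ from~\eqref{eq::general_obj}.

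It then remains to push the description of $Z$ in~\Cref{hypo::order_poly} through the map to obtain $Q$. The box constraints $z^-\leq z\leq z^+$ transform coordinatewise, and monotonicity of the power map preserves their order for $\eta<0$ and reverses it for $\eta\in(0,1)$, which explains the swap of $z^-$ and $z^+$ between the two displayed descriptions of $Q$. For the poset inequalities, substituting $z_i=\cz_i q_i^{\frac{-(1-\eta)}{\eta}}$ into $z_{i_1}\leq\kappa_{i_1,i_2}z_{i_2}$ and raising to the power $\frac{-\eta}{1-\eta}$ gives $q_{i_1}\leq(\kappa_{i_1,i_2}\tfrac{\cz_{i_2}}{\cz_{i_1}})^{\frac{-\eta}{1-\eta}}q_{i_2}$ when the exponent is positive ($\eta<0$) and the reversed inequality when it is negative ($\eta\in(0,1)$); the exponents cancel to $1$ on the left, leaving exactly the coefficient and orientation stated. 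Since the map is a bijection between $Z$ and $Q$, the two feasible sets --- and hence their optima --- correspond.

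The main obstacle is the careful bookkeeping of inequality directions under the power map: the sign of $\frac{-\eta}{1-\eta}$ drives both the orientation of the box and poset constraints and the monotonicity of the bijection, and it is the sole reason the conclusion splits into the cases $\eta<0$ and $\eta\in(0,1]$. A final routine step is to confirm that the transformed data satisfy~\Cref{hypo::L_and_M}, so that~\Cref{theorem::existence_1} applies: $L$ in~\eqref{eq::L} is affine in $(u,q)$, and $M$ in~\eqref{eq::M} is strictly convex in $q$ because $t\mapsto t^{1/\eta}$ is strictly convex on $\bbR_{>0}$ whenever $1/\eta<0$ or $1/\eta>1$, i.e.\ for every admissible $\eta$ with $\eta\neq 1$. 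I would flag the boundary value $\eta=1$, where the exponent $\frac{-\eta}{1-\eta}$ and hence the change of variables degenerates while $M$ becomes merely linear, as requiring separate treatment or exclusion.
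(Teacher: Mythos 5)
Your proposal is correct and follows essentially the same route as the paper: the change of variables $q_i=(z_i/\cz_i)^{\frac{-\eta}{1-\eta}}$ set up just before the theorem, combined with the strict monotonicity of the power map (increasing for $\eta<0$, decreasing for $\eta>0$) to transport the box and poset constraints of $Z$ onto $Q$, the paper's own proof being a terse version of exactly this argument. Your additional checks (that $\left<\calE_x(z),z\right>=\left<x,\cz\odot q\right>$, that \Cref{hypo::L_and_M} holds for the transformed data, and the flag on the degenerate case $\eta=1$) are accurate elaborations of what the paper leaves as ``immediate.''
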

\begin{proof}
Owing to assumption on the set $Q$ and the strict monotonicity of $z\mapsto z^{\frac{-\eta}{1-\eta}}$ (increasing for $\eta <0$ and decreasing for $\eta>0$), one can explicitly derive the form of $Q$. The rest of the formulation is immediate.
\end{proof}
\section{NUMERICAL RESULTS}\label{sec::results}

\subsection{Instance}
The numerical results were obtained on a laptop i7-
1065G7 CPU@1.30GHz. We provide in~\Cref{table::instance} the values of the parameters used in the application. In particular, we consider reference prices $(\hat{p},\hat{z})$ corresponding to French regulated prices, and reference consumption spread around the mean French consumption per household ($\calE_{\text{moy}} = 4$MWh). The cost function is taken as a quadratic function, scaled so that the marginal cost $C'(\calE_{\text{moy}}) = 0.08$\texteuro/kWh. In comparison, the production cost is estimated in France around 0.05\texteuro/kWh for nuclear plants\footnote{CRE (2022), \emph{Délibération n° 2022-45}} and up to 0.09\texteuro/kWh for wind energy\footnote{ADEME (2016), \emph{Coûts des énergies renouvelables en France}}.

\begin{figure*}[t]
\centering
\begin{subfigure}{0.32\linewidth}
\includegraphics[width=\linewidth,clip=true,trim=0.3cm 0cm 1cm 1.3cm]{./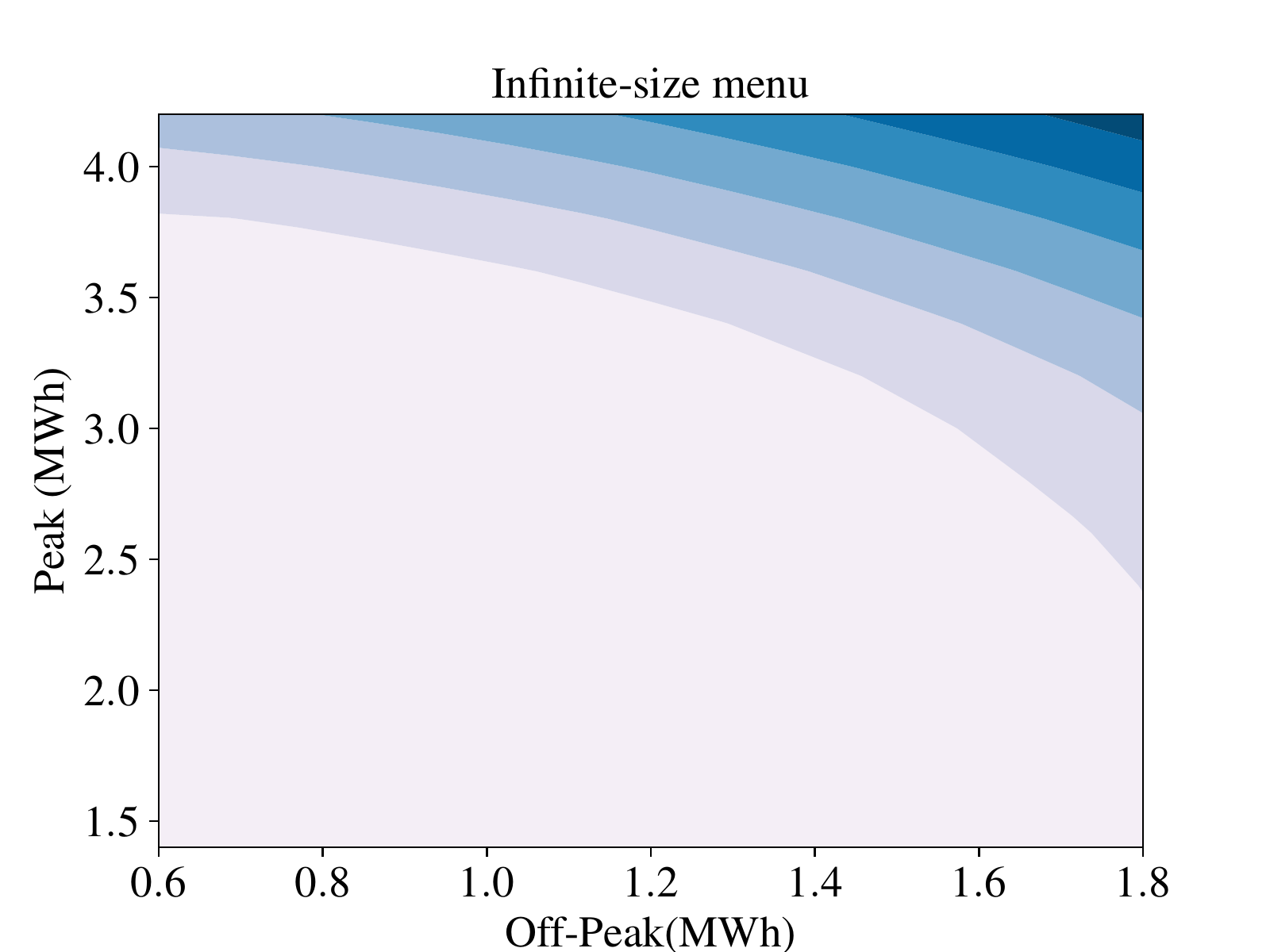}
\caption{Infinite-size menu}\label{fig::quantization_a}
\end{subfigure}
\begin{subfigure}{0.32\linewidth}
\includegraphics[width=\linewidth,clip=true,trim=0.3cm 0cm 1cm 1.3cm]{./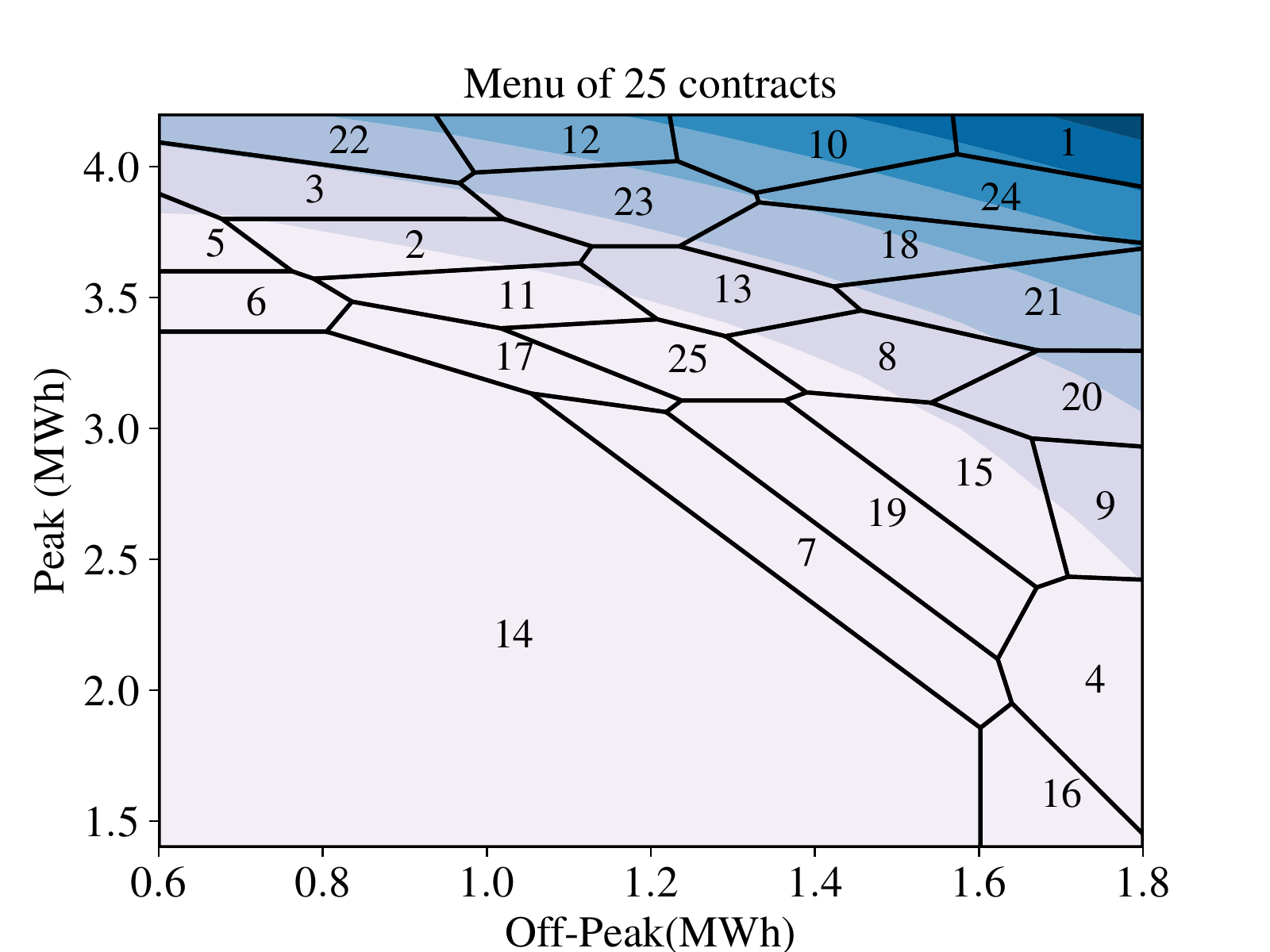}
\caption{Menu of 25 contracts}\label{fig::quantization_b}
\end{subfigure}
\begin{subfigure}{0.32\linewidth}
\includegraphics[width=\linewidth,clip=true,trim=0.3cm 0cm 1cm 1.3cm]{./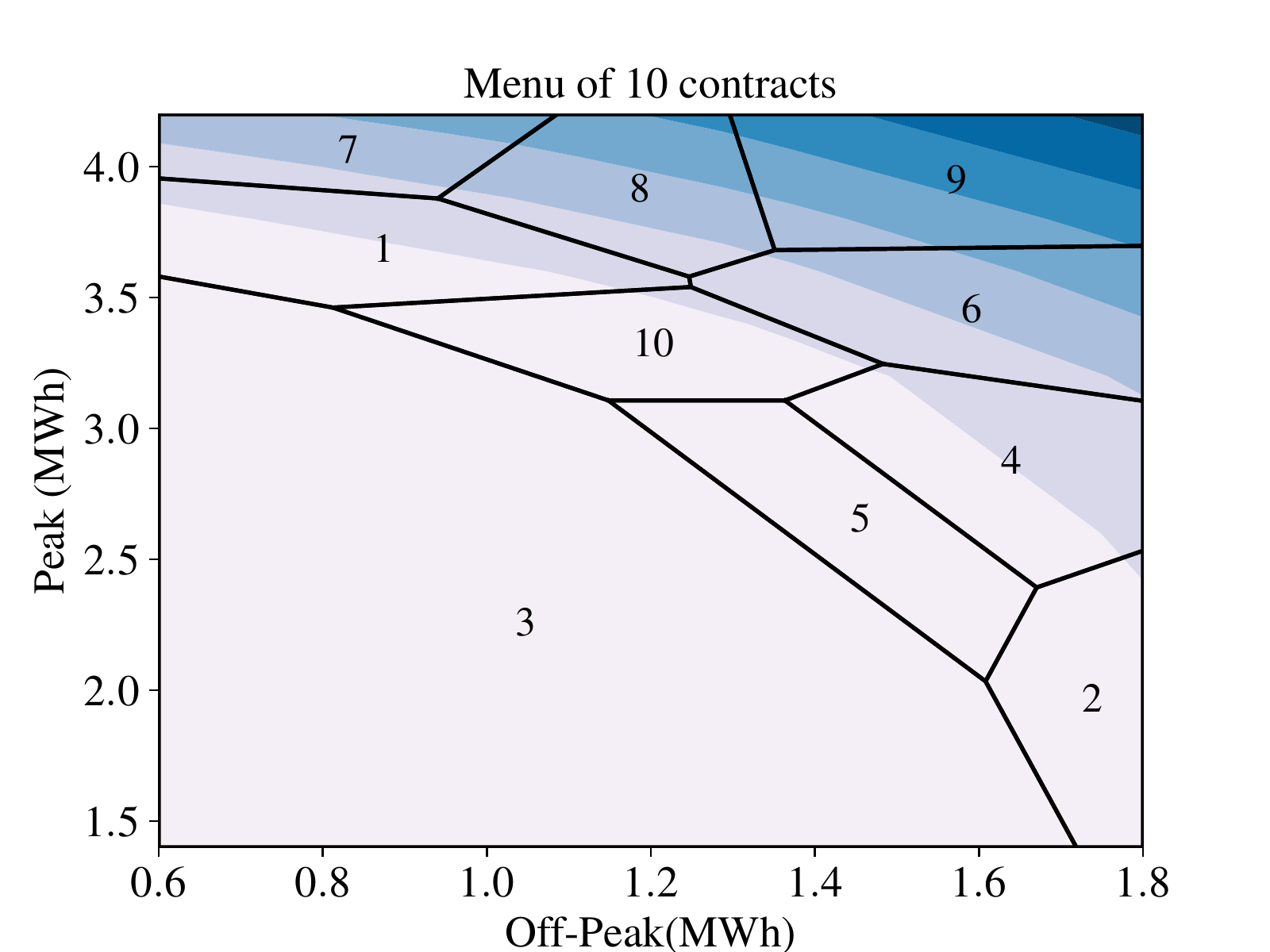}
\caption{Menu of 10 contracts}\label{fig::quantization_c}
\end{subfigure}
\caption{$L_1$-norm pruning for the electricity market case.\\
The normalized utility $u - R$ is depicted with colormap (light gray corresponds to the zero value and blue to high value).}\label{fig::quantization}
\end{figure*}

\begin{table}[!ht]
\centering
\begin{tabular}{r|l}
$\eta$ & -0.1\\
$\check{p}$ & 140\texteuro\\
$\cz$ & (0,174,019)\texteuro/kWh\\
$C(\cdot)$ & $0.01(\cdot)^2$\\
$(p^-,p^+)$ & $(0,500)$\texteuro\\
$(q_1^-,q_1^+)$ & $(0.05,0.5)$\texteuro/kWh\\
$(q_2^-,q_2^+)$ & $(0.05,0.5)$\texteuro/kWh\\
$\rho$ & Uniform$([0.6,1.8]\times [1.4,4.2])$\\
$R(\cdot)$ & linear function (one regulated contract)
\end{tabular}
\caption{Instance used in the numerical results.}\label{table::instance}
\end{table}
We display in~\Cref{fig::quantization} the infinite-size menu and the quantized solution for two different sizes of menu (25 contracts and 10 contracts). In each cell $\calC_i$, the contract $i$ brings to customers of reference consumption $x\in\calC_i$ the maximal utility given the quantized menu, i.e., $\hat{u}_S(x) = \hat{u}_i(x)$ for $x\in\calC_i$. 
We observe that there is a region/cell (light gray region) where the monopolist reproduces the alternative option (of utility $R$). On the other side, for high consumption (peak or off-peak), the monopolist manages to design contracts that provide strictly higher utility than the regulated offer, and at the same time, procure to the monopolist a higher revenue.

\subsection{Comparison of pruning objectives}
In the upper graph of~\Cref{fig::errors}, the three pruning objectives studied in the paper ($L_\infty$, $L_1$ and $J$-based) are compared with the 1-step approach of~\cite{McEneaney_2008,Gaubert_2011}. The approach consists in sorting the importance metrics for all $i\in\Sigma$, and directly taking the $n$ contracts with highest importance metric (here we consider the $J$-based importance metric). We display the relative objective loss, defined as 1 - $J_t/J_{\text{ref}}$, where $J_t$ is the objective for a menu of size $t$ and $J_{\text{ref}}$ the objective obtained with the infinite-size menu. Note that removing a contract can induce a violation of the full-participation constraint ($u\geq R$). Therefore, in order to recover a feasible solution at each iteration, we lift up the solution with the simple rule $u\gets u + \max\{\max_{x\in X} \{R(x)-u(x)\},0\}$.

On this example, the pruning procedure of Algo.~\ref{algo::up1} (greedy descent) leads to a significant loss reduction, whatever the criterion, compared with the 1-step approach. As expected, we observe that the $J$-based pruning has the smallest relative loss in the objective, as we minimize the error at each iteration of the process. In contrast, the $L_\infty$-norm does not capture sufficiently well the behavior of the objective function $J$, and has larger objective loss, even for a large number of contracts. 

We also depicted the cumulated time along the iterations in the lower graph of~\Cref{fig::errors} (we do not display the time for the 1-step procedure, as it is very fast, in less than 0.5s). For comparison, we add the cumulative time of a ``naïve'' $J$-based pruning, recomputing at each iteration the importance metric of each cell (global update). On this example, we observe that the computational time is already reduced by a factor almost 3 (this factor would be greater in higher dimension, as the neighborhood would be larger). As expected, the $L_\infty$ criterion is the fastest, owing to the fast local update rule exploiting the sparsity of optimal Lagrange multipliers (\Cref{alg:pruning}), 
and the $J$-based and $L_1$-norm criteria have similar computational time, as they use the same algorithmic architecture, see Algo.~\ref{algo::up1}. In terms of
loss minimization, the $J$-based pruning shows a loss of revenue reduced
by a factor of around $2$ by comparison with other methods.  This approach
allows us to determine the minimum number of contracts given an
admissible revenue loss: e.g., \Cref{fig::errors} shows that,
with a $J$-based quantization, a menu of $10$ contracts suffices to limit
the revenue loss to $4$\%.
\begin{figure}[!ht]
\centering
\includegraphics[width=0.95\linewidth]{./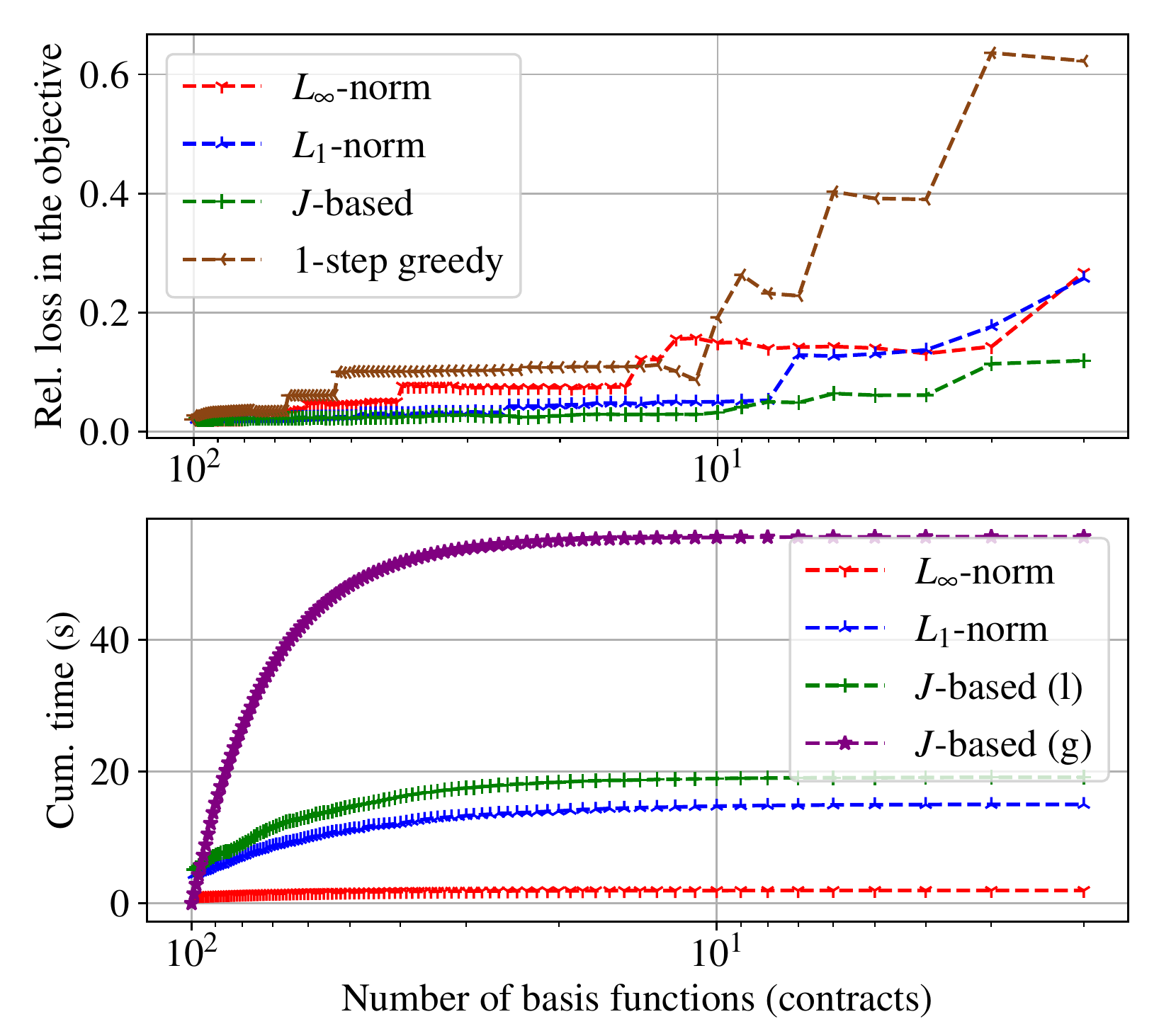}
\caption{Comparison of error bounds for the three types of pruning objective. 
The upper graph shows the loss of optimality induced by a reduced number of contracts.
The lower graph depicts cumulative time along the iterations. (g) stands for global update while (l) stands for local update.}\label{fig::errors}
\end{figure}

\section{CONCLUSION}
We have addressed a nonlinear pricing problem incorporating coupling costs. This  arises in electricity markets, where supply costs depend on the global consumption. We have developed a quantization procedure, allowing to maximize
the revenue of a provider, given a cardinality constraint on the set
of contracts. This relies on refined pruning procedures, inspired
by the max-plus basis methods in numerical optimal control. In particular,
we exploited the local nature of the pruning process, in order
to reduce the computational time. Thus, this leads to a new
class of applications for methods originally developed in
optimal control, and this also improves the complexity of a key
ingredient of these methods.

A strong parallel with vector quantization can be made, see e.g.~\cite{Pages_2015}. In this context, a different quantization problem is addressed
by Lloyd's procedures, {\it ibid.}. Whether these ideas
can be adapted to the quantization of the maximum of affine functions with revenue criterion is left for further work.


\printbibliography[heading=bibintoc]

\section{APPENDIX}\label{app::proof}

\subsection{Proof of~\Cref{theorem::existence_1}}\label{sec::existence}
Using~\Cref{prop::rochet_87}, for any solution, $\nabla u(x)=\alpha \odot q(x)$ for a.e. $x\in X$. We then directly study the existence and uniqueness in $(u,\nabla u)$.

\textbf{Existence.}
Let $H^1( X)$ be the Sobolev space associated with $X$. We define 
$$\calK =\left\{u \left\vert 
\begin{split}
&u \text{ convex and } u\geq R\\
&u(x)\in U_x,\; \alpha^{\shortminus 1}\odot \nabla u(x) \in Q\;,\;\forall x \in X
\end{split}
\right.\right\}\enspace.$$ 
The set $\calK$ is a closed, convex and bounded subset of $H^1( X)$ (it is bounded since $ X$ is bounded and $\|u\|_{L_\infty}$ and $\| \nabla u\|_{L_\infty}$ are bounded too; it is convex since $R$ is convex).

Besides, $J$ is concave (\Cref{hypo::L_and_M}). Moreover, as $Q\subset \bbR_{>0}^d$, there exist $a,b>0$ such that for any $x\in X$, $a\leq \|\nabla u(x)\|\leq b$. Therefore, there exists $c\in\bbR_+$ such that $|J(u,\nabla u)|\leq c$, and as a consequence, $J$ is continuous on $\calK$, see~\cite[Chapter 1, Proposition 2.5]{Ekeland_1999}.

Using the fact that $H^1( X)$ is reflexive and~\cite[Chapter 2, Proposition 1.2]{Ekeland_1999}, Problem~\eqref{eq::mf_problem_0} admits at least one solution.

\textbf{Uniqueness.} (Same arguments as in~\cite{Rochet_1998})
Let now consider two distinct solutions $u_1$ and $u_2$. Then, 
if $\nabla u_1 \neq \nabla u_2$ on a measurable subset, any function $t u_1 + (1-t) u_2$ is valid and gives a strictly better solution than $u_1$ and $u_2$ (due to strict convexity of the cost function $u\mapsto C(\int_X M(x,\nabla u(x))\dx)$ and linearity of $L$). Therefore, $u_1 - u_2$ is a constant function. By linearity of $L$, the objective value obtained with $u_1$ and $u_2$ differ by the same constant. This contradicts the optimality of the two solutions $u_1$ and $u_2$.

\subsection{Fast metric updates using Green's formula}\label{app::green}
The next proposition allows us to implement efficiently the local updates of the importance metric performed in~\ref{algo::update_nu_1}--\ref{algo::update_nu_J}.
\begin{prop}
Let $P$ a 2D-polytope describes by its vertices $(x_i,y_i)\in\bbR^2$ (counter-clockwise ordered). Then for any $a,b,c \in \bbR$,
\vspace{-1\baselineskip}
\begin{equation*}
\iint_P (ax+by+c)dxdy = \sum_{i = 1}^N \left[
\begin{split}
&\oint_{y_i}^{y_{i+1}}b(q_i+\tfrac{1}{\tau_i}y)ydy \\
&\shortminus \oint_{x_i}^{x_{i+1}}\hspace{-.5cm}(ax+c)(p_i+\tau_i x)dx
\end{split}
\right],
\end{equation*}
with $\tau_i = \frac{y_{i+1} - y_i}{x_{i+1} - x_i}$, $p_i:= y_i-\tau_i x_i$ and $q_i := x_i - \frac{1}{\tau}y_i$.
\end{prop}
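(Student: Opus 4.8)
The plan is to apply Green's theorem in the plane to convert the area integral into a boundary line integral, and then to evaluate the latter edge by edge. First I would split the integrand $ax+by+c$ into the two pieces $by$ and $ax+c$, choosing for each a primitive adapted to Green's theorem. For the $by$ term I take the field whose $y$-component is $M(x,y)=bxy$, so that $\partial_x M = by$; for the $ax+c$ term I take the $x$-component $L(x,y)=-(ax+c)y$, so that $-\partial_y L = ax+c$, whence $\partial_x M - \partial_y L = ax+by+c$. Green's theorem in the form $\iint_P (\partial_x M - \partial_y L)\,dx\,dy = \oint_{\partial P}(L\,dx + M\,dy)$ then gives
$$\iint_P (ax+by+c)\,dx\,dy = \oint_{\partial P} bxy\,dy \;-\; \oint_{\partial P}(ax+c)y\,dx .$$

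Next I would decompose the boundary $\partial P$, traversed counter-clockwise, into the $N$ oriented segments joining $(x_i,y_i)$ to $(x_{i+1},y_{i+1})$ (indices cyclic, with $(x_{N+1},y_{N+1})=(x_1,y_1)$), so that each line integral becomes a sum over edges. On edge $i$ the supporting line has slope $\tau_i=(y_{i+1}-y_i)/(x_{i+1}-x_i)$ and can be written either as $y=p_i+\tau_i x$ or as $x=q_i+\tfrac{1}{\tau_i}y$, with $p_i=y_i-\tau_i x_i$ and $q_i=x_i-\tfrac{1}{\tau_i}y_i$ exactly the quantities in the statement. Substituting $x=q_i+\tfrac{1}{\tau_i}y$ into the $\oint bxy\,dy$ piece (integrating in $y$ from $y_i$ to $y_{i+1}$) and $y=p_i+\tau_i x$ into the $\oint(ax+c)y\,dx$ piece (integrating in $x$ from $x_i$ to $x_{i+1}$) reproduces termwise the two summands on the right-hand side, so the edgewise sum equals the claimed expression.

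The one point requiring care is the treatment of axis-parallel edges, where the two parametrizations degenerate. For a horizontal edge one has $y_i=y_{i+1}$, so $\tfrac{1}{\tau_i}$ is undefined, yet the corresponding integral $\oint_{y_i}^{y_{i+1}}$ has coincident bounds and therefore vanishes; symmetrically, a vertical edge has $x_i=x_{i+1}$ and contributes nothing to the $dx$-integral. I would thus adopt the convention that any edge integral with equal bounds is zero, under which the stated formula remains valid for arbitrary polygons. This degenerate-edge bookkeeping is the only genuine obstacle; once it is dispatched, the identity follows directly from Green's theorem together with the elementary line equations of the edges, with no further analytic content.
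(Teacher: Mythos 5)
Your proposal is correct and takes essentially the same route as the paper: Green's formula applied with the field giving $\oint bxy\,dy - \oint (ax+c)y\,dx$, followed by an edgewise decomposition and substitution of the line equations $x=q_i+\tfrac{1}{\tau_i}y$ and $y=p_i+\tau_i x$. The only addition is your explicit treatment of degenerate axis-parallel edges, which the paper leaves implicit (and your $y=p_i+\tau_i x$ fixes an evident typo in the paper's statement of the second change of variables).
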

\begin{proof}
The application of the Green formula gives :
\begin{equation*}
\iint_P (ax+by+c)dxdy = \oint_{\calC_P} (bxy) dy - (ax+c)ydx\enspace,
\end{equation*}
where $\calC_P$ is the contour of the polytope $P$.
We then decompose on each edges, and use the change of variable $x = q + y/\tau$ in the first integral and $y = x+\tau x$ in the second one.
\end{proof}

\end{document}